\newtheorem{thm}{Theorem}[section]
\newtheorem{lem}[thm]{Lemma}
\newtheorem{rem}[thm]{Remark}
\journal{arXiv.org}
\begin{document}

\begin{frontmatter}

\title{An Alternative Approach to Convolutions of Harmonic Mappings}

\author[label1]{Chinu Singla}
\author[label2]{Sushma Gupta}
\author[label3]{Sukhjit Singh}
\address[label1]{Sant Longowal Institute of Engineering and Technology,
Longowal, Punjab, India.\\ $chinusingla204@gmail.com$}
\address[label2]{Sant Longowal Institute of Engineering and Technology,
Longowal, Punjab, India.\\ $sushmagupta1@yahoo.com$}
\address[label3]{Sant Longowal Institute of Engineering and Technology,
Longowal, Punjab, India.\\ $sukhjit\_d@yahoo.com$}              

\begin{abstract}
Convolutions or Hadamard products of analytic functions is a well explored area of research and many nice results are available in literature. On the other hand, very little is known in general about the convolutions of univalent harmonic mappings. So, researchers started exploring properties of convolutions of some specific univalent harmonic mappings and while doing so, they have mostly used well known \textit{\lq Cohn's rule\rq} or/and \textit{\lq Schur-Cohn's algorithm\rq}, which involves computations that are very cumbersome. The main objective of this article is to present an alternative approach, which requires very less computational efforts and allows us to prove more general results. Most of the earlier known results follow as particular cases of the results proved herein.
\end{abstract}

\begin{keyword}
Harmonic mapping, Convolution, Right half plane mapping, Convexity in one direction

\end{keyword}

\end{frontmatter}

\section{Introduction} \label{sec:1}
Let us condider a class $S_H$ of complex valued univalent harmonic functions $f$ in the open unit disc $\mathbb{D}=\{z \in \mathbb{C}:|z|<1\},$ having canonical representation $f=h+\overline{g}$ and normalized by the conditions $h(0)=g(0)=0=h'(0)-1.$  The Jacobian $J_f(z)$ of $f$ is given by $$J_f(z)=|f_z(z)|^2 -|f_{\overline{z}}(z)|^2=|h'(z)|^2-|g'(z)|^2.$$ \cite{1} proved that a harmonic function $f=h+\overline{g}$ is locally univalent and sense preserving in $\mathbb{D}$ if and only if $J_f(z)>0$ in $\mathbb{D}.$ This is equivalent to the existence of an analytic function $w(z)=\frac{g'(z)}{h'(z)},$ satisfying $|w(z)|<1$ in $\mathbb{D}.$ Here, $w$ is called the dilatation of $f$ (see \cite{2}).
\par Denote by $S_H^0$ the subclass of $S_H$ having functions $f$ with additional normalization condition $f_{\overline{z}}(0)=0.$ The subclass of $S_H(S_H^0)$ containing convex functions is denoted by $K_H(K_H^0).$ Let $S\subset S_H$ be the class of analytic and univalent functions and let $K, S^*$ and $C$ be the usual subclasses of $S$ containing convex, starlike and close-to-convex functions, respectively. A domain $E$ in $\mathbb{C}$ is said to be convex in the direction $\psi, 0 \leq \psi < \pi,$ if every line parallel to the line through $0$ and $ e ^{i \psi}$ has an empty or connected intersection with $E$. A function $f$ is said to be convex in the direction of $\psi$ if it maps $\mathbb{D}$ onto the domain convex in the direction of $\psi, 0 \leq \psi < \pi.$ If $\psi=0,$ then $f$ is said to be convex in the direction of the real axis and if $\psi=\pi/2,$ then $f$ is said to be convex in the direction of the imaginary axis.
\par The convolution or Hadamard product of two analytic functions $f(z)=z+\sum_{n=2}^\infty a_n z^n$ and $F(z)=z+\sum_{n=2}^\infty A_n z^n,$  is denoted by $f\ast F$ and is defined as $(f \ast F)(z)=z+ \sum_{n=2}^\infty a_nA_nz^n$. Let $f=h+\overline{g}$ and $F=H+\overline{G}$ be two harmonic mappings, then their convolution is denoted by $f\tilde{\ast} F$ and is defined as $f\tilde{\ast} F=h \ast H +\overline{g\ast G}.$
Many pleasant results are available in literature on the convolution of analytic functions. For example, \cite{22} proved that \begin{enumerate}
\item $f\ast g \in K$ for all $f,g \in K;$
\item $f\ast g \in S^*$ for all $f\in K$ and $g\in S^*;$
\item $f\ast g \in C$ for all $f\in K$ and $g\in C.$
\end{enumerate} 
On the other hand, not much is known about the properties of convolutions of harmonic functions and there are no general results of the kind stated above in this case. For example, for $F,G \in K_H,$ $F\tilde{\ast} G$ may not be univalent in $\mathbb{D}$ even (see \cite{13}). However, some researchers started exploring the nature of convolutions of some specific harmonic maps. We mentioned some of them below.
\par It is well known that if $f=h+\overline{g} \in S_H^0$ maps $\mathbb{D}$ onto the right half plane $R=\{w\in \mathbb{C} :Re w>-1/2\}, $ then it must satisfy $$ h(z)+g(z)=\frac{z}{1-z}.$$ In 2001, \cite{5} started study of convolution of right half plane harmonic mappings.  He presented following results.   
\begin{thm} Let $f_i=h_i+\overline{g_i}$ be the harmonic right half plane mappings with $h_i(z)+g_i(z)=\frac{z}{1-z}$ for $i=1,2.$ Then $f_1 \tilde{\ast} f_2\in S_H^0$ and is convex in the direction of the real axis provided, $f_1 \tilde{\ast} f_2$ is locally univalent and sense preserving in $\mathbb{D}.$\end{thm}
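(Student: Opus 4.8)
The plan is to reduce the whole claim to a statement about a single analytic function by means of the shearing theorem of Clunie and Sheil-Small, which asserts that a locally univalent, sense-preserving harmonic map $F=H+\overline{G}$ is univalent and convex in the direction of the real axis if and only if the analytic function $H-G$ is univalent and convex in the direction of the real axis. Since local univalence and sense-preservation of $f_1\,\tilde{\ast}\,f_2$ are exactly the standing hypotheses, everything will come down to analysing $H-G$, where $H=h_1\ast h_2$ and $G=g_1\ast g_2$.

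First I would exploit the fact that $\frac{z}{1-z}$ is the identity element for the Hadamard product on normalized power series. Writing $\phi_i=h_i-g_i$ and using $h_i+g_i=\frac{z}{1-z}$, I have $h_i=\tfrac12\!\left(\frac{z}{1-z}+\phi_i\right)$ and $g_i=\tfrac12\!\left(\frac{z}{1-z}-\phi_i\right)$. Expanding $H-G=h_1\ast h_2-g_1\ast g_2$ and repeatedly using $\frac{z}{1-z}\ast\psi=\psi$, the $\frac{z}{1-z}\ast\frac{z}{1-z}$ and $\phi_1\ast\phi_2$ contributions cancel in the difference, and I expect the clean identity
\[
H-G=\tfrac12(\phi_1+\phi_2)=\tfrac12\big((h_1-g_1)+(h_2-g_2)\big).
\]
This telescoping is the crucial simplification that dispenses with Cohn's rule: the difference $H-G$ is merely the average of the two analytic shears.

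Next I would bring in the dilatations. Differentiating $h_i+g_i=\frac{z}{1-z}$ and substituting $g_i'=w_i h_i'$ yields $(h_i-g_i)'=\frac{1-w_i}{1+w_i}\cdot\frac{1}{(1-z)^2}$. Setting $p_i=\frac{1-w_i}{1+w_i}$, the Möbius map $\zeta\mapsto\frac{1-\zeta}{1+\zeta}$ carries $\mathbb{D}$ into the right half plane, so $\operatorname{Re}p_i>0$ and $p_i(0)=1$. Consequently
\[
(1-z)^2\,(H-G)'(z)=\frac{p_1(z)+p_2(z)}{2},
\]
whose right-hand side has positive real part, being an average of two functions of positive real part; in particular $(H-G)'(0)=1$.

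Finally I would invoke the classical sufficient condition (the Royster–Ziegler criterion, taken with the factor $(1-z)^2$) that an analytic $\phi$ with $\phi(0)=0$, $\phi'(0)=1$ and $\operatorname{Re}\{(1-z)^2\phi'(z)\}>0$ in $\mathbb{D}$ is univalent and convex in the direction of the real axis. Applying it to $\phi=H-G$ shows that $H-G$ is convex in the direction of the real axis, after which the Clunie–Sheil-Small theorem promotes this to the harmonic map $f_1\,\tilde{\ast}\,f_2$; the membership $f_1\,\tilde{\ast}\,f_2\in S_H^0$ follows since $w_i(0)=0$ forces $G'(0)=0$. The main obstacle, and the only place where genuine care is required, is the justification of this directional-convexity criterion: one must verify that the positive-real-part condition on $(1-z)^2\phi'$ truly delivers \emph{univalence} together with convexity in the \emph{real} direction (not merely local univalence), and that the correct factor is $(1-z)^2$ rather than $1-z^2$. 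Once that lemma is in hand, the telescoping identity and the positivity of $\operatorname{Re}p_i$ make the remainder routine.
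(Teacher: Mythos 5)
Your proof is correct, and a contextual point matters for the comparison: this paper never actually proves Theorem 1.1 --- it is quoted from \cite{5}, and within the paper it reappears only as the special case $\alpha=\gamma=0$, $\eta=\pi$ of Lemma 2.2, whose proof is itself omitted with a pointer to Theorem 2 of \cite{3}. So the right benchmark is that classical argument, and yours is essentially it. Each of your steps checks out: the Clunie--Sheil-Small shearing theorem reduces everything to $H-G$; the telescoping identity $h_1\ast h_2-g_1\ast g_2=\tfrac12\bigl[(h_1-g_1)+(h_2-g_2)\bigr]$ is correct, since $\tfrac{z}{1-z}$ is the convolution identity and both the $\tfrac{z}{1-z}\ast\tfrac{z}{1-z}$ and $\phi_1\ast\phi_2$ terms cancel in the difference; and $(h_i-g_i)'=\tfrac{1-w_i}{1+w_i}\cdot\tfrac{1}{(1-z)^2}$ with $\mathrm{Re}\,\tfrac{1-w_i}{1+w_i}>0$ is valid because a right half-plane mapping in this paper's sense lies in $S_H^0$ and is univalent and sense-preserving, so $|w_i|<1$ (and $w_i(0)=0$, which is also what gives $F\in S_H^0$ at the end --- you should make this reading of the hypothesis explicit). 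On the one point you flagged as the genuine obstacle: the criterion does hold with the factor $(1-z)^2$, and your attribution to Royster--Ziegler is the right one. Concretely, $\phi$ is convex in the direction of the real axis iff $-i\phi$ is convex in the direction of the imaginary axis, and the Royster--Ziegler characterization applied to $-i\phi$ with parameters $\mu=\nu=\pi$ reads exactly $\mathrm{Re}\{(1-z)^2\phi'(z)\}\geq 0$; equivalently, it is the $\theta_1=\theta_2=\pi$ instance (starlike factor $\varphi(z)=z/(1-z)^2$) of the Pommerenke-type lemma that underlies Lemma 2.2 of this paper and Theorem 2 of \cite{3}. Your suspicion about $1-z^2$ was well placed: that factor (Hengartner--Schober) yields convexity in the direction of the \emph{imaginary} axis, which is not what is wanted here. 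Finally, note the contrast with what this paper actually proves: its Lemma 2.1/Theorem 2.3 machinery is aimed at \emph{removing} the local-univalence proviso for the special dilatations $e^{i\theta}z^n$, an issue that never arises in Theorem 1.1 because local univalence is hypothesized.
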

In the same paper, he defined a family of harmonic mappings $V_{\beta}=u_{\beta} + \overline{v_{\beta}},$ obtained from analytic strip mappings, by using \textit{\lq shearing technique \rq} \cite{4} \begin{equation} \label{eq:3}
u_{\beta}(z) + v_{\beta}(z)=\frac{1}{2iSin\beta}\log\left(\frac{1+ze^{i\beta}}{1+ze^{-i\beta}}\right),0<\beta<\pi
\end{equation} and established the following result.
\begin{thm} Let $f=h+\overline{g}$ be the harmonic right half plane mappings with $h(z)+g(z)=\frac{z}{1-z}$ and $V_{\beta}=u_{\beta} + \overline{v_{\beta}}\in S_H$ as defined in \eqref{eq:3} with $\pi/2 \leq \beta<\pi$. Then $f \tilde{\ast} V_{\beta}\in S_H^0$ and is convex in the direction of the real axis provided, $f \tilde{\ast} V_{\beta}$ is locally univalent and sense preserving in $\mathbb{D}.$\end{thm}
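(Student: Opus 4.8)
The plan is to run the Clunie--Sheil-Small shearing argument that underlies the preceding theorem, now adapted to the strip map $V_\beta$. Set $F:=f\,\tilde{\ast}\,V_\beta=H+\overline{G}$ with $H=h\ast u_\beta$ and $G=g\ast v_\beta$. Because the hypotheses already supply that $F$ is locally univalent and sense-preserving, the shearing theorem \cite{4} reduces the entire statement to the analytic assertion that $H-G=h\ast u_\beta-g\ast v_\beta$ is univalent and maps $\mathbb{D}$ onto a domain convex in the direction of the real axis. The normalization $h+g=\frac{z}{1-z}$ forces $g'(0)=0$, hence $G'(0)=0$; together with the univalence and convexity just described this will place $F$ in $S_H^0$.

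First I would simplify $H-G$ by using that $\frac{z}{1-z}$ is the identity for the Hadamard product, so that $\frac{z}{1-z}\ast\phi=\phi$ for every admissible $\phi$. Expanding $h,g$ and $u_\beta,v_\beta$ through their sums and differences and invoking $h+g=\frac{z}{1-z}$, one of the two products collapses and yields
\begin{equation*}
H-G=\tfrac12\Big[(u_\beta-v_\beta)+(h-g)\ast t\Big],\qquad t:=u_\beta+v_\beta .
\end{equation*}
Differentiating \eqref{eq:3} gives the clean formula $t'(z)=\big(1+2z\cos\beta+z^2\big)^{-1}$, where $1+2z\cos\beta+z^2=(1+ze^{i\beta})(1+ze^{-i\beta})$ has both of its zeros on $\partial\mathbb{D}$.

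Next I would test convexity in the direction of the real axis through a Royster--Ziegler type criterion, namely by showing
\begin{equation*}
\operatorname{Re}\Big\{\big(1+2z\cos\beta+z^2\big)\,(H-G)'(z)\Big\}>0,\qquad z\in\mathbb{D},
\end{equation*}
the weight being exactly the reciprocal of $t'$. With the dilatations $w_1=g'/h'$ of $f$ and $w_2=v_\beta'/u_\beta'$ of $V_\beta$, the strip contribution is immediate: since $(u_\beta-v_\beta)'=\frac{1-w_2}{1+w_2}\,t'$, the identity $\big(1+2z\cos\beta+z^2\big)t'\equiv1$ turns that half of the bracket into $\tfrac12\,\frac{1-w_2}{1+w_2}$, which has positive real part because $|w_2|<1$.

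The main obstacle is the remaining half-plane term $(h-g)\ast t$. Here the relation $h+g=\frac{z}{1-z}$ together with $g'=w_1h'$ gives $(h-g)'=\frac{1-w_1}{1+w_1}\,\frac{1}{(1-z)^2}$, and one must show that $\operatorname{Re}\{(1+2z\cos\beta+z^2)((h-g)\ast t)'\}\ge0$ uniformly over all admissible dilatations $w_1$ with $|w_1|<1$. Two features make this delicate: $H-G$ is a sum of two functions each convex in the real direction, and such convexity is not additive, so a piecewise argument is unavailable; and the estimate must hold for an arbitrary right half-plane factor $f$, not a single prescribed one. This is precisely the step at which the earlier treatments introduced Cohn's rule or the Schur--Cohn algorithm to control the zeros of an auxiliary polynomial. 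The alternative I would pursue is to rewrite the bracketed quantity as a quotient carrying a factor of manifestly positive real part, using $|w_1|<1$, $|w_2|<1$, and the hypothesis $\pi/2\le\beta<\pi$ (so that $\cos\beta\le0$) to fix the sign, thereby bypassing any root-location computation while simultaneously covering every right half-plane mapping $f$ at once.
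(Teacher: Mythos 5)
Your skeleton is correct, and it is in fact the same route the paper takes: Theorem 1.2 is exactly the case $\alpha=\gamma=0$, $\eta=\beta$ of the paper's Lemma 2.2, whose (omitted) proof runs along the lines you set up --- the Clunie--Sheil-Small shearing theorem reduces everything, under the local univalence hypothesis, to showing that $H-G=h\ast u_\beta-g\ast v_\beta$ is univalent and convex in the direction of the real axis. Your decomposition $H-G=\tfrac12\big[(u_\beta-v_\beta)+(h-g)\ast t\big]$ with $t=u_\beta+v_\beta$ is correct (it uses only that $z/(1-z)$ is the convolution identity), the weight $1+2z\cos\beta+z^2=(1+ze^{i\beta})(1+ze^{-i\beta})$ is the right one for the Royster--Ziegler criterion, and your evaluation of the strip term, $(1+2z\cos\beta+z^2)(u_\beta-v_\beta)'=\frac{1-w_2}{1+w_2}$, is fine. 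But the proof stops precisely at the decisive point: for the term $(h-g)\ast t$ you only announce an intention to ``rewrite the bracketed quantity as a quotient carrying a factor of manifestly positive real part,'' which is a restatement of what must be proved, not an argument. As it stands, this is a genuine gap.

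The missing idea is the Ruscheweyh--Sheil-Small convex hull lemma (from the paper's reference \cite{22}): if $\phi$ is convex univalent, $\psi$ is starlike, and $p$ is analytic in $\mathbb{D}$, then $(\phi\ast p\psi)/(\phi\ast\psi)$ takes its values in $\overline{\mathrm{co}}\,(p(\mathbb{D}))$. Apply it with $\phi=t$ (a convex strip mapping, since $zt'$ is starlike), $\psi=k$ where $k(z)=z/(1-z)^2$ is the Koebe function, and $p=\frac{1-w_1}{1+w_1}$, $\operatorname{Re}\,p>0$. Since $z(h-g)'=pk$, and since $z(\phi\ast\psi)'=\phi\ast(z\psi')=(z\phi')\ast\psi$, one gets $z\big((h-g)\ast t\big)'=t\ast(pk)$ and $zt'=t\ast k$, whence
\begin{equation*}
(1+2z\cos\beta+z^2)\,\big((h-g)\ast t\big)'(z)=\frac{\big(t\ast(pk)\big)(z)}{\big(t\ast k\big)(z)}\in\overline{\mathrm{co}}\,\big(p(\mathbb{D})\big),
\end{equation*}
so this term has nonnegative real part uniformly over all admissible dilatations $w_1$; added to the strictly positive strip term it yields $\operatorname{Re}\big\{(1+2z\cos\beta+z^2)(H-G)'\big\}>0$, and Royster--Ziegler finishes the proof. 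Note also that this completion nowhere uses $\cos\beta\le 0$: it works for all $0<\beta<\pi$ (which is why the paper's Lemma 2.2 and Theorem 2.3 hold for arbitrary $\eta$; the restriction $\pi/2\le\beta<\pi$ is an artifact of the original statement). So your plan to ``fix the sign'' using $\pi/2\le\beta<\pi$ points away from the mechanism that actually closes the argument.
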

\noindent Let $F_0=H_0+\overline{G_0}$ be the harmonic right half plane mapping given by  \begin{equation} \label{eq:standard}
H_0+G_0 =z/(1-z),\hspace{2cm} G_0'/H_0'=-z. 
\end{equation}  Then $F_0$ is called the standard right half plane mapping. \cite{3} were able to drop the requirement of local univalence and sense preserving from the above results as under.
\begin{thm} Let $F=H+\overline{G}$ be the harmonic right half plane mapping with $H(z)+G(z)=\frac{z}{1-z}$ and $G'(z)/H'(z)=e^{i\theta} z^n(\theta \in \mathbb{R},n\in \mathbb{N}).$ Then for $n=1,2,$ $F_0 \tilde{\ast} F\in S_H^0$ and is convex in the direction of the real axis. Here, $F_0$ is the standard right half plane mapping as defined above.\end{thm}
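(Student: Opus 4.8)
The plan is to deduce everything except local univalence and sense-preservation from the first theorem above, and then to reduce the remaining content to a single transparent estimate on the dilatation. Since the standard mapping $F_0$ and the mapping $F$ both satisfy $H_0+G_0=z/(1-z)$ and $H+G=z/(1-z)$, they are exactly the two right half-plane mappings to which the first theorem applies. Hence $F_0\,\tilde{\ast}\,F\in S_H^0$ and is convex in the direction of the real axis as soon as we know that $F_0\,\tilde{\ast}\,F$ is locally univalent and sense-preserving. So the whole task is to show that the dilatation $\tilde\omega=\tilde g'/\tilde h'$ of $F_0\,\tilde{\ast}\,F=\tilde h+\overline{\tilde g}$, where $\tilde h=H_0\ast H$ and $\tilde g=G_0\ast G$, satisfies $|\tilde\omega(z)|<1$ on $\mathbb D$; this is precisely the step that the Cohn-rule arguments handle by cumbersome computation.

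First I would put $\tilde h$ and $\tilde g$ in usable closed form. Writing $H_0=\tfrac12\bigl(\tfrac{z}{(1-z)^2}+\tfrac{z}{1-z}\bigr)$ and $G_0=-\tfrac12\bigl(\tfrac{z}{(1-z)^2}-\tfrac{z}{1-z}\bigr)$, and using the elementary convolution identities $\tfrac{z}{(1-z)^2}\ast\phi=z\phi'$ and $\tfrac{z}{1-z}\ast\phi=\phi$, one gets
\[
\tilde h=\tfrac12\bigl(zH'+H\bigr),\qquad \tilde g=\tfrac12\bigl(G-zG'\bigr),
\]
so that $\tilde h'=H'+\tfrac12 zH''$ and $\tilde g'=-\tfrac12 zG''$. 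Now I would substitute the dilatation relation $G'=\omega H'$ with $\omega=e^{i\theta}z^{n}$ (so $z\omega'=n\omega$) together with $H'=\bigl[(1-z)^2(1+\omega)\bigr]^{-1}$, which yields $H''/H'=\tfrac{2}{1-z}-\tfrac{\omega'}{1+\omega}$. After cancelling the common factor $H'$, a short simplification gives the compact expression
\[
\tilde\omega(z)=-\,\omega(z)\,\frac{n+(2-n)z+2z\,\omega(z)}{2+(2-n)\omega(z)+n z\,\omega(z)}.
\]

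With this formula in hand the two cases become routine. For $n=2$ both the numerator and denominator of the fraction collapse to $2\bigl(1+z\omega\bigr)$, so $\tilde\omega=-\omega=-e^{i\theta}z^2$ and $|\tilde\omega|=|z|^2<1$ at once. For $n=1$ one is left with $\tilde\omega=-e^{i\theta}z\,P(z)/Q(z)$, where $P(z)=1+z+2e^{i\theta}z^2$ and $Q(z)=2+e^{i\theta}z+e^{i\theta}z^2$. The crucial observation—and the natural replacement for Cohn's rule—is that $Q$ is the reciprocal polynomial of $P$ up to a unimodular factor: on $|z|=1$ one has $\overline z=1/z$, whence $Q(z)=e^{i\theta}z^{2}\,\overline{P(z)}$ and therefore $|P(z)|=|Q(z)|$. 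Since $Q(z)=2+e^{i\theta}z(1+z)$ cannot vanish for $|z|<1$ (the added term has modulus $|z|\,|1+z|<2$), the quotient $P/Q$ is analytic on $\mathbb D$ with boundary modulus $1$, so the maximum modulus principle gives $|P/Q|\le 1$ on $\mathbb D$; hence $|\tilde\omega|\le|z|<1$. In both cases $|\tilde\omega|<1$, so $F_0\,\tilde{\ast}\,F$ is locally univalent and sense-preserving, and the first theorem above completes the argument.

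The main obstacle is the $n=1$ estimate, and the whole point is that it dissolves once the numerator and denominator of $\tilde\omega$ are recognized as a reciprocal pair: the boundary identity $|P|=|Q|$ turns an otherwise delicate root-location question into a one-line maximum-principle argument. I expect the same mechanism—decompose $H_0,G_0$ against the Koebe function and the identity $z/(1-z)$, read off $\tilde\omega$ from the convolution identities, and exploit the reciprocal-polynomial symmetry—to extend with little change to more general dilatations, which is presumably how the earlier results are recovered as special cases.
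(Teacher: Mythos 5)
Your proposal is correct, but it takes a genuinely different route from the paper. The paper never proves this statement directly: it derives it (Remark 2.4) as the special case $\alpha=\gamma=0$, $\eta=\pi$, $a=0$ of Theorem 2.3, whose proof deliberately avoids ever writing the dilatation of the convolution as an explicit ratio of polynomials. Instead, after the same initial reduction you make (local univalence suffices, by Lemma 2.2 / Dorff's theorem), the paper expresses the dilatation as $-e^{i\theta}z^n\bigl(k+w(z)\bigr)/\bigl(k'+w(z)\bigr)$ with $w(z)=zR''(z)/R'(z)$, and invokes Lemma 2.1 to convert $|\widehat{w}|<1$ into the half-plane condition $\mathrm{Re}\,w(z)>-(n+2)/2$, which then follows from convexity of the sheared analytic map ($\mathrm{Re}(1+zf''/f')>0$) plus $\mathrm{Re}\bigl((1-e^{i\theta'}z^n)/(1+e^{i\theta'}z^n)\bigr)>0$. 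Your argument instead computes $\tilde\omega$ in closed form and, for $n=1$, exploits the conjugate-reciprocal symmetry $Q(z)=e^{i\theta}z^2\overline{P(z)}$ on $|z|=1$ together with the nonvanishing of $Q$ in $\mathbb{D}$; that is exactly the structure that Cohn's rule is designed to exploit, and you replace Cohn's rule by a maximum-modulus (Blaschke product) argument. What your approach buys is a short, self-contained proof for $n=1,2$ with $F_0$; what the paper's approach buys is uniformity: no root-location question ever arises, so the same three lines handle every $n\in\mathbb{N}$, the mappings $F_a$ with $a\in[(n-2)/(n+2),1)$, and the slanted half-plane and strip cases simultaneously — precisely where your triangle-inequality bound $|z||1+z|<2$ would stop scaling. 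Two small points to tighten: first, your maximum-modulus step needs a word when $Q$ vanishes on the boundary (this happens exactly for $\theta=\pi$, at $z=1$; there $P$ and $Q$ share the factor $1-z$, so the quotient extends and the argument survives); second, your closing expectation that the reciprocal-pair mechanism extends ``with little change'' to general dilatations is optimistic — that is the direction in which the literature's computations became cumbersome, and avoiding it is the paper's main point.
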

\begin{thm} Let $V_{\beta}=u_{\beta} + \overline{v_{\beta}}\in S_H$ be the harmonic mapping as defined in \eqref{eq:3} with $v_{\beta}'(z)/u_{\beta}'(z)=e^{i\theta} z^n(\theta \in \mathbb{R},n\in \mathbb{N}).$ Then for $n=1,2,$ $F_0 \tilde{\ast} V_{\beta}\in S_H^0$ and is convex in the direction of the real axis.\end{thm}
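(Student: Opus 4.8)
The plan is to combine the Clunie--Sheil-Small shearing theorem with a direct computation of the dilatation of the convolution, arranged so as to bypass Cohn's rule entirely. Writing $F_0\tilde{\ast} V_\beta = h+\overline g$ with $h = H_0\ast u_\beta$ and $g = G_0\ast v_\beta$, the first step is to record closed-form convolution identities for the standard right half-plane map. Since $H_0 = \tfrac12\bigl(\tfrac{z}{(1-z)^2}+\tfrac{z}{1-z}\bigr)$ and $G_0 = \tfrac12\bigl(\tfrac{z}{1-z}-\tfrac{z}{(1-z)^2}\bigr)$, and since $\tfrac{z}{(1-z)^2}\ast\phi = z\phi'$ while $\tfrac{z}{1-z}$ is the convolution identity, one obtains the operator formulas $H_0\ast\phi = \tfrac12(z\phi'+\phi)$ and $G_0\ast\phi=\tfrac12(\phi-z\phi')$. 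Hence $h' = u_\beta' + \tfrac12 z u_\beta''$ and $g' = -\tfrac12 z v_\beta''$, which already gives $h'(0)=u_\beta'(0)=1$ and $g'(0)=0$, the normalisation required for $S_H^0$.

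The second step is to feed in the dilatation hypothesis $v_\beta' = e^{i\theta}z^n u_\beta'$. Differentiating yields $z v_\beta'' = e^{i\theta}z^n\bigl(n u_\beta' + z u_\beta''\bigr)$, so the dilatation of the convolution is
\begin{equation*}
\tilde\omega = \frac{g'}{h'} = -\,e^{i\theta}z^n\,\frac{n u_\beta' + z u_\beta''}{2u_\beta' + z u_\beta''}.
\end{equation*}
This formula is the heart of the alternative approach. For $n=2$ the numerator factor $2u_\beta'+zu_\beta''$ coincides with the denominator and cancels, so $\tilde\omega = -e^{i\theta}z^2$ and $|\tilde\omega| = |z|^2<1$ in $\mathbb D$ with no further work; this single cancellation replaces the whole Schur--Cohn computation of the earlier proofs.

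For $n=1$ the ratio does not collapse, and this is where I expect the main difficulty. Substituting $u_\beta' = 1/R$ with $R(z) = (1+2z\cos\beta+z^2)(1+e^{i\theta}z)$ turns $|\tilde\omega|<1$ into the polynomial inequality $|z|\,|A(z)| < |B(z)|$ on $\mathbb D$, where $A = R - zR'$ and $B = 2R - zR'$. Rather than invoke Cohn's rule, I would exploit the structure of $A$ and $B$: a direct coefficient comparison should reveal that $A$ is, up to a unimodular constant, the conjugate-reciprocal polynomial $B^\ast(z)=z^3\overline{B(1/\bar z)}$ (I anticipate $A = -e^{i\theta}B^\ast$, reflecting that all zeros of $R$ lie on $|z|=1$). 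Granting this, $|A(z)| = |B(z)|$ holds identically on $|z|=1$, so $\tilde\omega = e^{2i\theta} z\,(B^\ast/B)$ has boundary modulus $|z|$. It then remains only to check that $B$ is zero-free in $\mathbb D$; writing $R = \prod_k(1+\gamma_k z)$ with $|\gamma_k|=1$ and using $\operatorname{Re}\frac{\gamma_k z}{1+\gamma_k z}<\tfrac12$ gives $\operatorname{Re}(B/R) = 2 - \operatorname{Re}\frac{zR'}{R} > 2 - \tfrac32 = \tfrac12>0$, so $B\neq 0$ in $\mathbb D$ and the maximum modulus principle yields $|\tilde\omega| = |z|\,|B^\ast/B| \le |z| < 1$. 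The same estimate gives $\operatorname{Re}(B/R) > (2-n)/2$ for general $n$, which is positive in $\mathbb D$ exactly in the range $n\le 2$ --- a clean explanation of the restriction in the statement.

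Finally, with local univalence and sense-preservation in hand ($J_f = |h'|^2(1-|\tilde\omega|^2)>0$, where $h' = (2R-zR')/(2R^2)\neq 0$ by the same positivity), convexity in the direction of the real axis follows by shearing. One verifies that $h-g = \tfrac12\bigl[z(u_\beta'+v_\beta') + (u_\beta - v_\beta)\bigr]$ is analytic and convex in the direction of the real axis, for which I would apply the standard criterion for functions convex in one direction, namely positivity of $\operatorname{Re}\bigl[(1-2z\cos\mu+z^2)(h-g)'(z)\bigr]$ for a suitable angle $\mu$, working from the explicit form $(h-g)' = u_\beta' + \tfrac12 z\,p'$ with $p = u_\beta'+v_\beta' = (1+2z\cos\beta+z^2)^{-1}$; the Clunie--Sheil-Small shearing theorem then gives that $F_0\tilde{\ast} V_\beta$ is univalent, lies in $S_H^0$, and maps onto a domain convex in the direction of the real axis. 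I expect the $n=1$ conjugate-reciprocal identity to be the only genuinely delicate point; everything else is either a short computation or a direct appeal to the shearing machinery.
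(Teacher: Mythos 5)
Your proposal is correct, but it takes a genuinely different route from the paper's. The paper never proves this statement in isolation: it is deduced (Remark 2.4, item 2) from the general Theorem 2.3 by setting $a=0$, $\alpha=\gamma=0$, $\eta=\beta$. The proof there computes exactly the dilatation you do --- your formula $\tilde\omega=-e^{i\theta}z^n(nu_\beta'+zu_\beta'')/(2u_\beta'+zu_\beta'')$ is the paper's equation \eqref{eq:9} with $a=0$ --- but then treats all $n$ and the whole range $a\in[(n-2)/(n+2),1)$ uniformly: by the elementary half-plane Lemma 2.1 it suffices to prove $\operatorname{Re}(zR''/R')>-(n+2)/2$, and this follows from the identity $n+2+2zR''/R'=2\left(1+zf''/f'\right)+n\left(1-e^{i(\theta-2\gamma)}z^n\right)/\left(1+e^{i(\theta-2\gamma)}z^n\right)$, convexity of the strip map ($zf'$ starlike, by Pommerenke), and positivity of the M\"obius term; the univalence and direction-of-convexity conclusion is then outsourced to Lemma 2.2 (Theorem 2 of Dorff--Nowak--Woloszkiewicz). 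Your argument coincides with the paper's only at $n=2$, which is precisely the paper's borderline case $a=(n-2)/(n+2)$ where numerator and denominator collapse. For $n=1$ your mechanism is different, and it does check out: writing $R=\sum_{k=0}^{3}c_kz^k$ one indeed has $c_j=e^{i\theta}\overline{c_{3-j}}$, whence $A=R-zR'=-e^{i\theta}B^*$ for $B=2R-zR'$, and your estimate $\operatorname{Re}(B/R)>1/2$ gives zero-freeness of $B$. Note, though, that reflection of a polynomial in the unit circle is exactly the engine behind Cohn's rule --- the machinery this paper was written to avoid; you short-circuit the algorithm only because the exact symmetry $A=cB^*$ happens to hold, and a coefficient comparison shows it holds only when $n=1$ (for general parameters, matching $\left(\frac{n-(n+2)a}{1-a}-j\right)=\lambda\left(\frac{2}{1-a}-3+j\right)$ forces $\lambda=-1$ and then $(n-1)(1-a)=0$). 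So your method is structurally illuminating for this theorem but does not scale to the paper's Theorem 2.3 (general $n$, $a$, slanted half-planes), which is what the Lemma 2.1 route buys. Conversely, you prove the shearing step directly rather than citing Lemma 2.2, and your sketch is completable: with your $\mu$ chosen so that $1-2z\cos\mu+z^2=1+2z\cos\beta+z^2$, one gets $(1+2z\cos\beta+z^2)(h-g)'=\frac{1}{1+e^{i\theta}z^n}-\frac12+\frac12\cdot\frac{1-z^2}{1+2z\cos\beta+z^2}$, and both $\operatorname{Re}\frac{1}{1+e^{i\theta}z^n}>\frac12$ and $\operatorname{Re}\frac{1-z^2}{1+2z\cos\beta+z^2}>0$ hold in $\mathbb{D}$, so the Pommerenke-type criterion applies.

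Two small repairs. First, in your maximum-modulus step $B$ may vanish on $|z|=1$, so $B^*/B$ need not be continuous up to the boundary as written; either argue factor-by-factor from $B^*/B=e^{-2i\theta}\prod_k(1-\overline{b_k}z)/(z-b_k)$ with $|b_k|\ge1$ (boundary zeros contribute unimodular constants $-\overline{b_k}$, interior-free factors have modulus $\le1$ since $|1-\overline{b_k}z|^2-|z-b_k|^2=(|b_k|^2-1)(|z|^2-1)\le0$), or apply the maximum principle on discs of radius $r<1$. Second, your closing claim that the bound $\operatorname{Re}(B/R)>(2-n)/2$ explains the restriction to $n\le2$ ``exactly'' is only heuristic: the failure of your lower bound for $n\ge3$ does not show the theorem fails there, it only shows this particular estimate degenerates.
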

For a real number $\gamma,0 \leq \gamma <2\pi,$ the mappings $f_\gamma=h_\gamma+\overline{g_\gamma},$ given by \begin{equation} \label{eq:5}
 h_\gamma(z)+e^{-2i\gamma} g_\gamma(z)=\frac{z}{1-e^{i\gamma}z}
\end{equation} is called slanted right half plane mapping and maps the unit disc $\mathbb{D}$ onto slanted right half plane given by $H(\gamma)=\{w\in \mathbb{C} :Re (e^{i\gamma} w)>-1/2\}$ (see \cite{3}). We denote by $S_{H(\gamma)}$, the family of all slanted right half plane mappings.\\
 \cite{35} proved the following.
\begin{thm} Let $f_\gamma \in S_{H(\gamma)}$ be as defined above with dilatation $g_\gamma'(z)/h_\gamma'(z)=e^{i\theta}z^n,\theta \in \mathbb{R},n=1,2.$ Then $F_0\tilde{\ast} f_\gamma \in S_H^0$ and is convex in the direction of $-\gamma.$  
\end{thm}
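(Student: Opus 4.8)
The plan is to reduce the slanted problem to the right half-plane problem already settled by \cite{3}, using the observation that a slanted right half-plane mapping is merely a rotation of an ordinary right half-plane mapping and that convolution against the \emph{fixed} standard map $F_0$ intertwines with this rotation. Concretely, I would prove the identity
\[
(F_0\tilde{\ast} f_\gamma)(z)=e^{-i\gamma}\,(F_0\tilde{\ast} f_0)(e^{i\gamma}z)
\]
for a suitable right half-plane mapping $f_0$, and then transport the conclusion of the $\gamma=0$ theorem through the rotation.

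First I would exhibit $f_0$. Setting $f_0(\zeta):=e^{i\gamma}f_\gamma(e^{-i\gamma}\zeta)$, a direct check gives $f_0(\mathbb{D})=e^{i\gamma}H(\gamma)=R$, and comparing with the defining relation $h_\gamma+e^{-2i\gamma}g_\gamma=z/(1-e^{i\gamma}z)$ shows that the analytic and co-analytic parts of $f_0$ satisfy $h_0+g_0=z/(1-z)$; equivalently $f_\gamma(z)=e^{-i\gamma}f_0(e^{i\gamma}z)$, so that $h_\gamma(z)=e^{-i\gamma}h_0(e^{i\gamma}z)$ and $g_\gamma(z)=e^{i\gamma}g_0(e^{i\gamma}z)$. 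Differentiating these relations converts the hypothesis $g_\gamma'/h_\gamma'=e^{i\theta}z^n$ into $g_0'/h_0'=e^{i\theta'}z^{n}$ with $\theta'=\theta-(n+2)\gamma\in\mathbb{R}$; thus $f_0$ is exactly a right half-plane mapping of the type covered by \cite{3}, and for $n=1,2$ we conclude $F_0\tilde{\ast} f_0\in S_H^0$ and is convex in the direction of the real axis.

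Next I would establish the rotation identity by comparing Taylor coefficients. Since convolution multiplies coefficients term by term, and the operation $p(z)\mapsto e^{-i\gamma}p(e^{i\gamma}z)$ multiplies the coefficient of $z^m$ by $e^{i(m-1)\gamma}$ while $q(z)\mapsto e^{i\gamma}q(e^{i\gamma}z)$ multiplies it by $e^{i(m+1)\gamma}$, one gets $H_0\ast h_\gamma=e^{-i\gamma}(H_0\ast h_0)(e^{i\gamma}z)$ and $G_0\ast g_\gamma=e^{i\gamma}(G_0\ast g_0)(e^{i\gamma}z)$; combining the two and using $\overline{e^{i\gamma}(\,\cdot\,)}=e^{-i\gamma}\overline{(\,\cdot\,)}$ yields the displayed identity. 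Finally, the map $F\mapsto e^{-i\gamma}F(e^{i\gamma}z)$ is a disc automorphism followed by a rotation, so it preserves univalence and sense-preservation and, as a short check of derivatives at $0$ shows, preserves the normalizations defining $S_H^0$; hence $F_0\tilde{\ast} f_\gamma\in S_H^0$. Its image is $e^{-i\gamma}$ times a domain convex in the horizontal direction, and rotating such a domain by $e^{-i\gamma}$ produces a domain convex in the direction $-\gamma$, giving the second assertion. I do not expect a genuine analytic obstacle here: the substantive content (local univalence and directional convexity of $F_0\tilde{\ast} f_0$) is imported wholesale from \cite{3}, and the only care needed is the bookkeeping that the analytic and co-analytic parts rotate by $e^{-i\gamma}$ and $e^{i\gamma}$ respectively, which is precisely what makes the convolution identity close up.
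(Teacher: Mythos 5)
Your proof is correct, but it takes a genuinely different route from the paper's. The paper does not deduce the slanted case from the unslanted one: it obtains this statement (Theorem 1.5) as the special case $\alpha=a=0$, $\eta=\pi$ of its general Theorem 2.3 (see Remark 2.4(3)), whose proof computes the dilatation of the convolution via the shear technique, reduces the required bound $|\widehat{w}(z)|<1$ through Lemma 2.1 to the real-part inequality $Re\{zR_{(\eta,\gamma)}''(z)/R_{(\eta,\gamma)}'(z)\}>-(n+2)/2$, verifies this from the convexity of $f$ together with $Re\{(1-e^{i(\theta-2\gamma)}z^n)/(1+e^{i(\theta-2\gamma)}z^n)\}>0$, and then invokes Lemma 2.2 to get univalence and directional convexity. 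You instead prove the intertwining identity $(F_0\tilde{\ast} f_\gamma)(z)=e^{-i\gamma}(F_0\tilde{\ast} f_0)(e^{i\gamma}z)$ --- which is correct, as your coefficient check shows, and uses no special property of $F_0$ --- and import the entire analytic content (the dilatation bound for $n=1,2$ and the resulting univalence and convexity of $F_0\tilde{\ast} f_0$) from Theorem 1.3 of \cite{3}. What your route buys: it is essentially computation-free, and it makes transparent that passing from Theorem 1.3 to the slanted Theorem 1.5 is pure rotational bookkeeping, so any unslanted convolution theorem with a fixed left factor automatically yields a slanted analogue. What the paper's route buys: it is self-contained, it avoids Cohn's rule (which underlies the proof in \cite{3} that you quote as a black box --- precisely the machinery this paper sets out to replace), and a single computation delivers the much more general Theorem 2.3 (arbitrary $a\in[(n-2)/(n+2),1)$, all $n$, strip-type right factors, both slant parameters), from which Theorems 1.3--1.8 all follow as corollaries. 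Note also that the paper employs your rotation observation internally, though only at the level of dilatations: in the proof of Theorem 2.3 the reduction $W(z)=e^{2i\alpha}\widehat{w}(ze^{i\alpha})$ normalizes $\alpha$ to zero before the estimate, which is a local version of your global identity.
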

 \cite{7} generalized Theorem 1.3 by taking $F_a$ instead of $F_0,$ where $F_a=H_a +\overline{ G_a}\in K_H$ is the right half plane mapping given by
\begin{equation} \label{eq:2}
H_a(z)+G_a(z)=\frac{z}{1-z}, \hspace{0.5cm}
\frac{G_a'(z)}{H_a'(z)}=\frac{a-z}{1-az}, a\in (-1,1) 
\end{equation}
and proved the following.
\begin{thm} Let $F=H+\overline{G}$ be the harmonic right half plane mapping with $H(z)+G(z)=\frac{z}{1-z}$ and $G'(z)/H'(z)=e^{i\theta} z^n(\theta \in \mathbb{R},n\in \mathbb{N}).$ Then $F_a \tilde{\ast} F\in S_H$ and is convex in the direction of the real axis for all $a\in [(n-2)/(n+2),1).$\end{thm}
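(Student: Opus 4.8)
The plan is to reduce the statement to the shear theorem of Clunie and Sheil-Small, which says that if $f=h+\overline g$ is locally univalent and sense-preserving in $\mathbb{D}$ and the analytic function $h-g$ maps $\mathbb{D}$ univalently onto a domain convex in the direction of the real axis, then $f$ itself is univalent with image convex in the direction of the real axis. Writing $f=F_a\tilde{\ast}F=h+\overline g$ with $h=H_a\ast H$ and $g=G_a\ast G$, I would avoid computing any Hadamard product term-by-term by using two elementary facts: $z/(1-z)$ is the identity for $\ast$, and $z/(1-z)^2$ acts on $\ast$ as the operator $z\,d/dz$. Combined with $H_a+G_a=H+G=z/(1-z)$ and the easily checked identity $H_a-G_a=\kappa\,z/(1-z)^2$, where $\kappa=\tfrac{1-a}{1+a}$, bilinearity of $\ast$ yields the closed forms
\[
h-g=\tfrac12\Big[\kappa\,\tfrac{z}{(1-z)^2}+(H-G)\Big],\qquad h+g=\tfrac12\Big[\tfrac{z}{1-z}+\kappa\,z(H-G)'\Big].
\]

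The convexity hypothesis is then the easy half. Since $(H-G)'=(1-w)/\big[(1-z)^2(1+w)\big]$ with $w=e^{i\theta}z^{n}$, one gets
\[
(1-z)^2(h-g)'=\tfrac12\Big[\kappa\,\tfrac{1+z}{1-z}+\tfrac{1-w}{1+w}\Big].
\]
Because $\tfrac{1+z}{1-z}$ and $\tfrac{1-w}{1+w}$ both have positive real part in $\mathbb{D}$ and $\kappa>0$, we obtain $\mathrm{Re}\big[(1-z)^2(h-g)'\big]>0$, so by the standard analytic criterion (the case $\mu=0$) $h-g$ is univalent and convex in the direction of the real axis. Crucially this holds for every $a\in(-1,1)$, so the restriction on $a$ does \emph{not} come from here.

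The second hypothesis, local univalence and sense-preservation, is where the dilatation $\tilde\omega=g'/h'$ must be shown to satisfy $|\tilde\omega|<1$. From the formulas above a direct computation writes $\tilde\omega=N_g/N_h$ after clearing the common factor $(1-z)^{-3}(1+w)^{-2}$, where $N_g$ and $N_h$ are polynomials in $z$; explicitly $\tfrac12N_h=(1-z)(1+w)+\kappa\big[(1+z)+\big((1-n)+(n+1)z\big)w\big]$ and $N_g=2wM$ with $M=(1-z)(1+w)-\kappa\big(w+(n+1)+zw-(n-1)z\big)$. The structural observation that replaces Cohn's rule is that on $\partial\mathbb{D}$ these numerators are conjugate: a one-line substitution $z\mapsto 1/z,\;w\mapsto 1/w$ gives $z\,w\,\overline M=-\tfrac12N_h$ on $|z|=1$, so $|\tilde\omega|\equiv1$ there while $\tilde\omega(0)=0$. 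Consequently $|\tilde\omega|<1$ throughout $\mathbb{D}$ by the maximum principle \emph{provided} $N_h$ has no zero in $\overline{\mathbb{D}}$, which is precisely the remaining point.

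The main obstacle is therefore exactly the zero-freeness of $N_h$ on $\overline{\mathbb{D}}$, and this is where the bound on $a$ enters. At the threshold $\kappa=2/n$, i.e. $a=(n-2)/(n+2)$, one checks that a boundary zero of $N_h$ appears (for instance at $z=-1$ with a suitable $\theta$), while for $\kappa\le 2/n$, equivalently $a\ge(n-2)/(n+2)$, one must prove $N_h$ stays zero-free in the open disc. I would attempt this by estimating $\tfrac12N_h=(1-z)(1+w)+\kappa\big[(1+z)+((1-n)+(n+1)z)w\big]$ directly on $\overline{\mathbb{D}}$, isolating the term $\kappa(n+1)zw$ and controlling the remainder, so as to keep the argument free of the Schur–Cohn algorithm; carrying out this zero-free estimate sharply in $a$ is the hard part. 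Once it is in place, the Clunie–Sheil-Small theorem gives that $F_a\tilde{\ast}F$ is univalent, sense-preserving and convex in the direction of the real axis, and the earlier results (Theorems 1.3 and the $F_0$ case $a=0$) drop out as the special values of $a$.
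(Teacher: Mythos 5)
Your convolution-identity decomposition of $h\pm g$ is correct, and your argument that $\mathrm{Re}\left[(1-z)^2(h-g)'\right]>0$ (so $h-g$ is univalent and convex in the direction of the real axis for \emph{every} $a\in(-1,1)$) is sound; that half essentially reproves, in this special case, the lemma the paper imports from Dorff, Nowak and Woloszkiewicz. The genuine gap is that the other half — the dilatation bound $|g'/h'|<1$, which is where the hypothesis $a\in[(n-2)/(n+2),1)$ enters and which is the entire content of the theorem — is never proved: you reduce it to zero-freeness of $N_h$ on $\overline{\mathbb{D}}$ and then explicitly defer "carrying out this zero-free estimate sharply in $a$" as "the hard part." So the proposal is a reduction, not a proof. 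Worse, the reduction you chose (conjugate-symmetric polynomial numerators, maximum principle, zero location on the closed disc) is precisely the Cohn's rule / Schur--Cohn route of the earlier literature, with all of its case-by-case estimates still ahead of you; and as you set it up it cannot even handle the endpoint $a=(n-2)/(n+2)$, which the theorem includes: there $N_h$ acquires a boundary zero (as you note), so "no zero in $\overline{\mathbb{D}}$" fails, and one must instead see that the quotient degenerates by cancellation.

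The step you are missing can be closed with no polynomial zero-location at all, which is exactly the paper's point. Do not clear denominators; keep $zH''/H'$ as the variable. Substituting $G'=e^{i\theta}z^nH'$ and $G''=e^{i\theta}z^nH''+ne^{i\theta}z^{n-1}H'$ into the dilatation $\widetilde\omega=\bigl(2aG'-(1-a)zG''\bigr)/\bigl(2H'+(1-a)zH''\bigr)$ of $F_a\tilde\ast F$ gives
\[
\widetilde\omega(z)=-e^{i\theta}z^n\,
\frac{\dfrac{n-(n+2)a}{1-a}+\dfrac{zH''(z)}{H'(z)}}{\dfrac{2}{1-a}+\dfrac{zH''(z)}{H'(z)}}.
\]
Now use the elementary observation that for real $k'>k$ one has $\left|\frac{k+w}{k'+w}\right|<1$ if and only if $\mathrm{Re}(w)>-\frac{k+k'}{2}$. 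Here $k=\frac{n-(n+2)a}{1-a}$ and $k'=\frac{2}{1-a}$, so $k'>k$ exactly when $a>(n-2)/(n+2)$ (when $a=(n-2)/(n+2)$, $k=k'$ and $\widetilde\omega=-e^{i\theta}z^n$ trivially), and $\frac{k+k'}{2}=\frac{n+2}{2}$ \emph{independently of} $a$. Thus everything reduces to $\mathrm{Re}\bigl(zH''/H'\bigr)>-\frac{n+2}{2}$, which follows from your own formula $H'(z)=f'(z)/(1+e^{i\theta}z^n)$ with $f(z)=z/(1-z)$, since
\[
n+2+\frac{2zH''(z)}{H'(z)}
=2\left(1+\frac{zf''(z)}{f'(z)}\right)+n\left(\frac{1-e^{i\theta}z^n}{1+e^{i\theta}z^n}\right),
\]
and both summands have positive real part in $\mathbb{D}$ ($f$ is convex, and the second factor is a Möbius map of the disc into the right half plane). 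This closes the gap with two lines, handles the endpoint automatically, and avoids the Schur--Cohn machinery your sketch would otherwise force you to reconstruct.
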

Similarly, in an attempt to generalize Theorem 1.4, \cite{6} proposed the following conjecture. Which they themselves proved for $0<\beta <\pi$ and $n=1,2,3,4;$ and also for the case when $\beta =\pi/2$ and $n$ is a natural number. \\ 
\begin{thm} Let $V_{\beta}=u_{\beta} + \overline{v_{\beta}}\in S_H$ be the harmonic mapping as defined in \eqref{eq:3} with $v_{\beta}'(z)/u_{\beta}'(z)=e^{i\theta} z^n(\theta \in \mathbb{R},n\in \mathbb{N}).$ Then $F_a \tilde{\ast} V_{\beta}\in S_H$ and is convex in the direction of the real axis for all $a\in [(n-2)/(n+2),1),$ where, $F_a$ is the harmonic right half plane mapping as defined in \eqref{eq:2}.\end{thm}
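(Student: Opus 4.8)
The plan is to apply the Clunie--Sheil-Small shearing theorem: since the factor $1+2z\cos\beta+z^2$ never vanishes in $\mathbb{D}$, once I know that $F:=F_a\tilde\ast V_\beta=h+\overline g$ (with $h=H_a\ast u_\beta$, $g=G_a\ast v_\beta$) is locally univalent and sense-preserving, it suffices to prove that the analytic function $h-g$ is univalent and convex in the direction of the real axis. The computational engine is a single convolution identity. Writing $H_a=\frac{1-a}{2(1+a)}\frac{z}{(1-z)^2}+\frac12\frac{z}{1-z}$ and using that $\frac{z}{1-z}$ is the Hadamard unit while $\frac{z}{(1-z)^2}\ast\Phi=z\Phi'$, I obtain, for every analytic $\Phi$,
\[
H_a\ast\Phi=c\,z\Phi'+\tfrac12\Phi,\qquad G_a\ast\Phi=\tfrac12\Phi-c\,z\Phi',\qquad c:=\frac{1-a}{2(1+a)}.
\]
Since $u_\beta+v_\beta=\psi$ with $\psi'=\frac{1}{1+2z\cos\beta+z^2}$, applying this with $\Phi=u_\beta$ and $\Phi=v_\beta$ gives closed forms for $h-g$ and for the dilatation $\widetilde\omega=g'/h'$ with no series manipulation.

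For convexity in the real direction I would compute $(h-g)'$ and invoke the standard Hengartner--Schober / Royster--Ziegler criterion, which here reduces to checking $\operatorname{Re}\{(1+2z\cos\beta+z^2)(h-g)'(z)\}>0$. A short calculation gives
\[
(1+2z\cos\beta+z^2)(h-g)'(z)=c\,\frac{1-z^2}{1+2z\cos\beta+z^2}+\frac12\,\frac{1-e^{i\theta}z^n}{1+e^{i\theta}z^n}.
\]
Both summands have positive real part in $\mathbb{D}$: the second is a M\"obius image of $\mathbb{D}$ in the right half-plane, and the first is a positive multiple ($c>0$) of a function whose boundary values on $|z|=1$ are purely imaginary and which equals $1$ at the origin, hence has positive real part by the minimum principle for harmonic functions. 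This shows $h-g$ is convex in the direction of the real axis \emph{for every} $a\in(-1,1)$, so this half of the argument carries no restriction.

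The restriction $a\ge (n-2)/(n+2)$ enters only through local univalence, which is the main obstacle and the place where I replace Cohn's rule. The identity yields $\widetilde\omega(z)=e^{i\theta}z^{n}B_1(z)/B_2(z)$, a rational function, where with $A:=(1+2z\cos\beta+z^2)(1+e^{i\theta}z^n)$ one has $B_2=(\tfrac12+c)A-czA'$ and $B_1=(\tfrac12-c(n+1))A+czA'$. I argue in two steps. First, ruling out poles is the crux: since $A\neq0$ in $\mathbb{D}$, $B_2=0$ would force $\tfrac{zA'}{A}=1+\tfrac{1}{2c}$, but $\tfrac{zA'}{A}=\tfrac{zP'}{P}+\tfrac{ne^{i\theta}z^n}{1+e^{i\theta}z^n}$ (with $P=1+2z\cos\beta+z^2$) satisfies $\operatorname{Re}\tfrac{zA'}{A}<1+\tfrac n2$ throughout $\mathbb{D}$, since each of the two conformal pieces of $\tfrac{zP'}{P}$ has real part $<\tfrac12$ and the last term has real part $<\tfrac n2$; whereas $1+\tfrac{1}{2c}\ge 1+\tfrac n2$ exactly when $c\le\tfrac1n$, i.e.\ $a\ge\tfrac{n-2}{n+2}$. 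Hence under the hypothesis $\widetilde\omega$ has no pole in $\mathbb{D}$. Second, evaluating on $|z|=1$, where $\operatorname{Re}\tfrac{zA'}{A}=1+\tfrac n2$, yields $|B_1|=|B_2|$, so $|\widetilde\omega|=1$ on $\partial\mathbb{D}$. Thus $\widetilde\omega$ is analytic in $\mathbb{D}$, unimodular on the boundary and vanishes at the origin; it is a finite Blaschke product, so $|\widetilde\omega|<1$ in $\mathbb{D}$ and $J_F=|h'|^2(1-|\widetilde\omega|^2)>0$.

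With local univalence and sense-preservation established and $h-g$ shown convex in the real direction, the shearing theorem delivers $F_a\tilde\ast V_\beta\in S_H$ convex in the direction of the real axis, uniformly in $n$. The delicate point to watch is the extremal case $a=(n-2)/(n+2)$ (equivalently $cn=1$), where $B_2$ may vanish at isolated boundary points; there I would check that $\widetilde\omega$ nonetheless extends continuously with modulus one, so that the Blaschke-product conclusion, and hence $|\widetilde\omega|<1$ inside, is unaffected.
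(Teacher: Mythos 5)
Your proposal is correct in substance and shares its analytic engine with the paper, but the packaging differs at both ends, so a comparison is worthwhile. The paper proves a more general statement (its Theorem 2.3, with slant parameters $\alpha,\gamma$ and a strip parameter $\eta$) and obtains this theorem as the case $\alpha=\gamma=0$, $0<\eta<\pi$. Its proof (i) quotes Lemma 2.2 (modelled on Theorem 2 of Dorff--Nowak--Woloszkiewicz) to convert local univalence into univalence plus convexity in the real direction, and (ii) reduces $|\widehat{w}|<1$ to the estimate \eqref{eq:10.1}, $\operatorname{Re}\{zR_{(\eta,\gamma)}''/R_{(\eta,\gamma)}'\}>-\frac{n+2}{2}$, via its Lemma 2.1. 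Your verification that $h-g$ is convex in the direction of the real axis via Royster--Ziegler replaces (i) and makes the argument self-contained at that point, which is a genuine gain. As for (ii), since $R'=1/A$ in your notation, one has $zR''/R'=-zA'/A$, so your inequality $\operatorname{Re}\{zA'/A\}<1+\frac{n}{2}$ \emph{is} the paper's estimate, proved the same way (you split $zP'/P$ into two M\"obius pieces where the paper invokes convexity of $f$ via Pommerenke; these are equivalent). But you then spend this estimate only on excluding poles and finish with a boundary-unimodularity/Blaschke-product argument. That detour is unnecessary: writing $u=zA'/A$, you have $B_1=cA\,(u-p_1)$ and $B_2=-cA\,(u-p_2)$ with $p_1=(n+1)-\frac{1}{2c}$ and $p_2=1+\frac{1}{2c}$, two real numbers whose midpoint is $\frac{n+2}{2}$ and which satisfy $p_1\le p_2$ exactly when $c\le\frac1n$, i.e.\ $a\ge\frac{n-2}{n+2}$. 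Hence $\operatorname{Re}\,u<\frac{n+2}{2}$ in $\mathbb{D}$ already forces $|u-p_1|<|u-p_2|$, i.e.\ $|B_1|<|B_2|$ and $|\widetilde\omega|<1$ pointwise, with no boundary analysis at all; this is precisely the paper's Lemma 2.1 in geometric form. In particular your ``delicate'' extremal case $a=\frac{n-2}{n+2}$ dissolves: there $c=\frac1n$, $p_1=p_2$, so $B_1\equiv-B_2$ and $\widetilde\omega(z)=-e^{i\theta}z^n$ identically.

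The one genuine flaw is your justification of $\operatorname{Re}\left\{\frac{1-z^2}{1+2z\cos\beta+z^2}\right\}>0$. This function has poles at the boundary points $-e^{\pm i\beta}$, so it is unbounded in $\mathbb{D}$, and ``a.e.\ purely imaginary boundary values, value $1$ at the origin, minimum principle'' is not a valid inference for unbounded harmonic functions: for instance $\operatorname{Re}\left\{2\frac{1+ze^{i\epsilon}}{1-ze^{i\epsilon}}-\frac{1+z}{1-z}\right\}$ equals $1$ at the origin and vanishes a.e.\ on $|z|=1$, yet tends to $-\infty$ as $z\to1$ radially. Whether positivity survives depends on the residues at the boundary poles, which your argument never inspects. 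The claim itself is true, and the repair is one line in the spirit of Lemma 2.1: $\operatorname{Re}\frac{1-z^2}{1+2z\cos\beta+z^2}>0$ if and only if $\left|(1+2z\cos\beta+z^2)-(1-z^2)\right|<\left|(1+2z\cos\beta+z^2)+(1-z^2)\right|$, i.e.\ $|z|\left|\frac{z+\cos\beta}{1+z\cos\beta}\right|<1$, which holds because $z\mapsto\frac{z+\cos\beta}{1+z\cos\beta}$ is a disc automorphism for $0<\beta<\pi$. With this repair (and, optionally, the shortcut above in place of the Blaschke argument), your proof is complete.
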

 \cite{12} gave the alternative proof of the above result for all $n \in \mathbb{N}$ and $\beta=\pi/2.$\\
Recently, \cite{10} considered more general slanted right half plane mappings $F_{(a,\alpha)}=H_{(a,\alpha)} +\overline{ G_{(a,\alpha)}}\in S_{H(\alpha)},$ essentially given by 
\begin{equation} \label{eq:4}
H_{(a,\alpha)}(z)+e^{-2i\alpha}G_{(a,\alpha)}(z)=\frac{z}{1-ze^{i\alpha}}, \hspace{0.3cm}
\frac{G_{(a,\alpha)}'(z)}{H_{(a,\alpha)}'(z)}=e^{2i\alpha}\left(\frac{a-ze^{i\alpha}}{1-aze^{i\alpha}}\right),
\end{equation} 
where, $a \in(-1,1),\alpha \in [0,2\pi)$ and studied its convolution with another slanted right half plane mapping $f_\gamma=h_\gamma+\overline{g_\gamma}$ given by \eqref{eq:5} and with dilatation \begin{equation} \label{dilatation}
g_\gamma'(z)/h_\gamma'(z)=e^{i\theta}z^n,\theta \in \mathbb{R},n\in \mathbb{N}.
\end{equation} They obtained the following result.
\begin{thm} Let $F_{(a,\alpha)}=H_{(a,\alpha)} +\overline{ G_{(a,\alpha)}}\in S_{H(\alpha)}$ be given by \eqref{eq:4} and $f_\gamma=h_\gamma+\overline{g_\gamma}$ be given by \eqref{eq:5} with dilatation as given in \eqref{dilatation}. Then $F_{(a,\alpha)} \tilde{\ast} f_{\gamma}\in S_H$ and is convex in the direction of $-(\alpha+ \gamma)$ for all $a\in [(n-2)/(n+2),1).$\end{thm}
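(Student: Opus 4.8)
The plan is to deduce the slanted statement from its unslanted special case, namely the case $\alpha=\gamma=0$ of the present theorem (which is exactly Theorem 1.6), by showing that passing from $\alpha=\gamma=0$ to general $(\alpha,\gamma)$ amounts to a rigid rotation, and that under the Hadamard product the two slant angles simply add. First I would record the rotation identities $F_{(a,\alpha)}(z)=e^{-i\alpha}F_a(e^{i\alpha}z)$ and $f_\gamma(z)=e^{-i\gamma}\hat f(e^{i\gamma}z)$, where $F_a$ is the unslanted right half-plane mapping of \eqref{eq:2} and $\hat f=\hat h+\overline{\hat g}$ is an unslanted mapping with $\hat h+\hat g=z/(1-z)$. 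These follow by differentiating the defining equations \eqref{eq:4} and \eqref{eq:5}: one checks $H_{(a,\alpha)}'(z)=H_a'(e^{i\alpha}z)$ and the analogous relation for $G_{(a,\alpha)}$, and similarly for $f_\gamma$, and then integrates using the normalization at the origin. The bookkeeping point to get right here is the placement of the conjugations, so that the analytic part picks up the factor $e^{-i\alpha}$ while the co-analytic part picks up $e^{+i\alpha}$.

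Next I would identify the unslanted partner $\hat f$ explicitly. Writing $h_\gamma(z)=e^{-i\gamma}\hat h(e^{i\gamma}z)$ and $g_\gamma(z)=e^{i\gamma}\hat g(e^{i\gamma}z)$ and substituting into \eqref{eq:5} and \eqref{dilatation} shows that $\hat h+\hat g=z/(1-z)$ and that the dilatation of $\hat f$ is $\hat g'/\hat h'=e^{i\hat\theta}z^{n}$ with $\hat\theta=\theta-(n+2)\gamma$. Hence $\hat f$ is precisely a mapping of the type occurring in Theorem 1.6 (same $n$, only a harmless change of the real parameter $\theta$), so Theorem 1.6 yields $F_a\tilde{\ast}\hat f\in S_H$ and convex in the direction of the real axis for every $a\in[(n-2)/(n+2),1)$.

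The decisive step is the additivity of the slant under the Hadamard product. Expanding both analytic parts in power series and using that $\bigl(\sum c_k e^{ik\alpha}z^k\bigr)\ast\bigl(\sum d_k e^{ik\gamma}z^k\bigr)=\sum c_k d_k e^{ik(\alpha+\gamma)}z^k$, I would establish
\begin{equation*}
H_{(a,\alpha)}\ast h_\gamma=e^{-i(\alpha+\gamma)}(H_a\ast\hat h)(e^{i(\alpha+\gamma)}z),\qquad G_{(a,\alpha)}\ast g_\gamma=e^{i(\alpha+\gamma)}(G_a\ast\hat g)(e^{i(\alpha+\gamma)}z),
\end{equation*}
and hence, combining the two and using $\overline{e^{i(\alpha+\gamma)}w}=e^{-i(\alpha+\gamma)}\overline{w}$,
\begin{equation*}
(F_{(a,\alpha)}\tilde{\ast}f_\gamma)(z)=e^{-i(\alpha+\gamma)}(F_a\tilde{\ast}\hat f)(e^{i(\alpha+\gamma)}z).
\end{equation*}
Since $z\mapsto e^{i(\alpha+\gamma)}z$ is a rotation of $\mathbb{D}$ and $w\mapsto e^{-i(\alpha+\gamma)}w$ is a rotation of the plane, univalence, sense-preservation and the normalization $h'(0)=1$ are all inherited from $F_a\tilde{\ast}\hat f$ (one checks directly that the analytic part has derivative $1$ at the origin). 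Moreover the image of $F_{(a,\alpha)}\tilde{\ast}f_\gamma$ is the image of $F_a\tilde{\ast}\hat f$ rotated through the angle $-(\alpha+\gamma)$, which turns convexity in the direction of the real axis into convexity in the direction $-(\alpha+\gamma)$. This gives the conclusion on the same range $a\in[(n-2)/(n+2),1)$.

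The step I expect to carry the real weight is not contained in this reduction at all, but inside Theorem 1.6: the argument reuses verbatim the bound that the dilatation of $F_a\tilde{\ast}\hat f$ has modulus less than $1$ together with the shearing criterion for convexity in one direction, and it is exactly the sharpness of the range $a\ge(n-2)/(n+2)$ that is delicate there. Within the present argument the only genuine care needed is the conjugation bookkeeping and verifying that $\hat f$ really lands in the hypothesis class of Theorem 1.6. If one preferred a self-contained proof that does not invoke Theorem 1.6, one would instead shear $F_{(a,\alpha)}\tilde{\ast}f_\gamma$ directly and verify the dilatation bound; this is precisely where Cohn's rule would ordinarily enter, and where the paper's alternative approach is intended to reduce the labour.
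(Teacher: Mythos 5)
Your argument is correct, but it takes a genuinely different route from the paper. You deduce the slanted theorem from its unslanted case (Theorem 1.6) by verifying the rotation identities $F_{(a,\alpha)}(z)=e^{-i\alpha}F_a(e^{i\alpha}z)$ and $f_\gamma(z)=e^{-i\gamma}\hat f(e^{i\gamma}z)$, noting that $\hat f$ has dilatation $e^{i(\theta-(n+2)\gamma)}z^n$ and hence lies in the hypothesis class of Theorem 1.6, and then using the coefficient identity for Hadamard products to show that slant angles add under convolution, so that $F_{(a,\alpha)}\tilde{\ast} f_\gamma$ is a pre/post-rotation of $F_a\tilde{\ast}\hat f$; since rotations preserve membership in $S_H$ and convert convexity in the direction of the real axis into convexity in the direction $-(\alpha+\gamma)$, the claim follows on the same range of $a$. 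All of this bookkeeping checks out, and the argument is non-circular since Theorem 1.6 predates and is independent of the statement being proved. The paper, by contrast, obtains this statement as the specialization $\eta=\pi$ of the more general Theorem 2.3 (Remark 2.4, item 1), which it proves directly: after shearing and a rotation-in-$\alpha$ reduction (a partial version of your trick), the dilatation bound $|\widehat{w}|<1$ is reduced via the elementary Lemma 2.1 to $Re\{zR_{(\eta,\gamma)}''/R_{(\eta,\gamma)}'\}>-(n+2)/2$, which follows from convexity of $f$ (Pommerenke's starlikeness of $zf'$) together with positivity of $Re\left(\frac{1-e^{i(\theta-2\gamma)}z^n}{1+e^{i(\theta-2\gamma)}z^n}\right)$, and Lemma 2.2 then yields the directional convexity. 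The trade-off: your reduction is nearly computation-free and exposes that the slant parameters carry no independent content, but it outsources the entire analytic core to Theorem 1.6, whose existing proof (Kumar et al.) rests on Cohn's rule---precisely the machinery this paper is written to avoid; moreover, the rotation trick cannot recover Theorem 2.3 itself, because the unslanted case of that theorem (the full conjecture recorded as Theorem 1.7, for all $n$ and all $\eta$) was not previously known in complete generality, whereas the paper's self-contained argument proves it and obtains Theorems 1.6, 1.7 and 1.8 as corollaries.
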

  \cite{11} considered a new family of harmonic mappings $f_c=h_c+\overline{g_c},$ convex in horizontal direction given by 
\begin{equation} \label{eq:6}
h_c(z)-g_c(z)= \frac{z}{1-z} \hspace{0.5cm}
\frac{g_c'(z)}{h_c'(z)}=z.
\end{equation}
and presented following result.
\begin{thm} For $n \in \mathbb{N},$ let $f_n=h_n+\overline{g_n},$ be harmonic mappings with $$h_n(z)-g_n(z)=\frac{1}{2i \sin \psi}\log\left(\frac{1+ze^{i\psi}}{1+ze^{-i\psi}}\right),\pi/2 \leq \psi<\pi$$ and $\frac{g_n'(z)}{h_n'(z)}=e^{i\theta} z^n(\theta \in \mathbb{R}).$ If $n=1,2,$ then $f_c \tilde{\ast} f_n \in S_H^0$ and is convex in the direction of the real axis, where $f_c$ is given by \eqref{eq:6}.\end{thm}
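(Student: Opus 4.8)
The natural framework is the shearing theorem \cite{4}: if $F=H+\overline G$ is locally univalent and sense preserving and the analytic function $H-G$ is univalent and convex in the direction of the real axis, then $F$ itself is univalent and convex in the direction of the real axis. Writing the convolution as $F_0:=f_c\,\tilde{\ast}\,f_n=H+\overline G$ with $H=h_c\ast h_n$, $G=g_c\ast g_n$, the proof splits into two tasks: (i) show $H-G$ is univalent and convex in the horizontal direction, and (ii) show the dilatation $\omega=G'/H'$ of $F_0$ satisfies $|\omega(z)|<1$ on $\mathbb D$, which by \cite{1} is precisely local univalence and sense preservation. The membership $F_0\in S_H^0$ then follows once the normalisation is checked.

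I would first reduce every quantity to $h_n,g_n$. Since $h_c-g_c=z/(1-z)$ and $g_c'/h_c'=z$, one gets $h_c+g_c=z/(1-z)^2$; hence $h_c=\tfrac12\big(z/(1-z)+z/(1-z)^2\big)$ and $g_c=\tfrac12\big(z/(1-z)^2-z/(1-z)\big)$. Using $\varphi\ast\frac{z}{1-z}=\varphi$ and $\varphi\ast\frac{z}{(1-z)^2}=z\varphi'$ gives the compact forms $H=\tfrac12(h_n+zh_n')$ and $G=\tfrac12(zg_n'-g_n)$, so that $H'(0)=h_n'(0)=1$ and, because $n\ge1$, $G'(0)=0$; thus $F_0$ carries the $S_H^0$ normalisation. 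Differentiating and inserting $g_n'=e^{i\theta}z^nh_n'$ collapses the dilatation to
\begin{equation*}
\omega(z)=\frac{G'}{H'}=e^{i\theta}z^n\,\frac{nh_n'+zh_n''}{2h_n'+zh_n''}.
\end{equation*}
For task (i), $H-G$ is shown convex in the horizontal direction by the analytic criterion underlying \cite{4}. With $\phi:=h_n-g_n$, $\phi'(z)=1/(1+2z\cos\psi+z^2)$, and the factorisation $1+2z\cos\psi+z^2=(1+ze^{i\psi})(1+ze^{-i\psi})$, a short computation yields
\begin{equation*}
(H-G)'(z)=\frac{(1+z\cos\psi)+z(\cos\psi+z)\,e^{i\theta}z^n}{(1+2z\cos\psi+z^2)^2\,(1-e^{i\theta}z^n)},
\end{equation*}
and the needed real-part positivity is handled by the same argument that shows the strip map $\phi$ is convex in the horizontal direction for $\pi/2\le\psi<\pi$. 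This step is routine; the substance of the theorem is task (ii).

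The main obstacle is the bound $|\omega|<1$. Substituting $h_n'=1/[(1+2z\cos\psi+z^2)(1-e^{i\theta}z^n)]$ into $\omega$ and clearing denominators, one obtains
\begin{equation*}
\omega(z)=e^{i\theta}z^n\,\frac{A(z)+e^{i\theta}z^n B^{*}(z)}{B(z)+e^{i\theta}z^n A^{*}(z)},\qquad A(z)=n+2(n-1)z\cos\psi+(n-2)z^2,\quad B(z)=2+2z\cos\psi,
\end{equation*}
where $A^{*},B^{*}$ are the degree-two reverse polynomials of $A,B$. The observation that streamlines everything is that on $|z|=1$ one has $A^{*}(z)=z^2\overline{A(z)}$, $B^{*}(z)=z^2\overline{B(z)}$, whence
\begin{equation*}
\big|A+e^{i\theta}z^nB^{*}\big|^2=|A|^2+|B|^2+2\,\mathrm{Re}\big(e^{-i\theta}\bar z^{\,n+2}AB\big)=\big|B+e^{i\theta}z^nA^{*}\big|^2,
\end{equation*}
so that $|\omega|\equiv1$ on the unit circle. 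Consequently, provided the denominator $M(z):=B(z)+e^{i\theta}z^nA^{*}(z)$ is zero-free in $\mathbb D$, the function $\omega$ is analytic there with $\omega(0)=0$ and boundary modulus $1$, and $|\omega|<1$ in $\mathbb D$ follows from the maximum principle. The entire difficulty thus collapses to the zero-freeness of $M$ in $\mathbb D$. This is exactly where the literature invokes Cohn's rule or the Schur--Cohn algorithm, and where the restriction to $n=1,2$ enters: for $n=1$ and $n=2$ the polynomial $M$ has degree three and four respectively, so its zero-freeness can be settled directly, whereas for large $n$ the degree grows and the property can fail. I would expect the cleanest route, and the one advertised here as the alternative to Cohn's rule, to be a direct proof that $M\neq0$ on $\mathbb D$, for instance through a lower bound on $|M|$ over the circles $|z|=r<1$ that exploits $\cos\psi\in(-1,0]$ for $\pi/2\le\psi<\pi$, thereby bypassing the cumbersome coefficient computations of the Schur--Cohn test.
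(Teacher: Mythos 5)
Your computations are all correct up to the point where the theorem actually has content, and then the proof stops. The compact forms $H=\tfrac12(h_n+zh_n')$, $G=\tfrac12(zg_n'-g_n)$, the dilatation $\omega=e^{i\theta}z^n(nh_n'+zh_n'')/(2h_n'+zh_n'')$, the identities $A=np-zp'$ and $A^*=np-B$ (with $p(z)=1+2z\cos\psi+z^2$), and the unimodularity of $\omega$ on $|z|=1$ all check out. But having reduced local univalence to the zero-freeness of $M(z)=B(z)+e^{i\theta}z^nA^*(z)$ in $\mathbb{D}$, you never prove it: you only express the expectation that a lower bound on $|M|$ over circles $|z|=r<1$ should work. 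No crude bound does. For $n=1$ one has $M(z)=2+2z\cos\psi+e^{i\theta}z(z^2-1)$, and near $z=\pm i$ the triangle-inequality estimate $2(1-|z\cos\psi|)-|z|\,|z^2-1|$ is negative as soon as $\psi>\pi/2$; any successful estimate must track the phase correlation between the two terms, which is exactly the Cohn-rule-type difficulty the earlier papers wrestle with. So the hard half of the theorem --- that $f_c\,\tilde{\ast}\,f_n$ is locally univalent and sense preserving --- is missing. (Your task (i) is also only asserted, but it can legitimately be delegated to a citable result, namely Lemma 3.1 of this paper, the analogue of Theorem 2 of Dorff, Nowak and Woloszkiewicz; the genuine deficit is task (ii).)

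What you missed is that your own formula for $\omega$ makes the polynomial detour unnecessary; this is precisely the paper's "alternative approach." Write $w=zh_n''/h_n'$, so that $|\omega|<1$ amounts to $\left|(n+w)/(2+w)\right|\leq 1$. Lemma 2.1 of the paper, a two-line computation, says that for real $k<k'$ one has $\left|(k+w)/(k'+w)\right|<1$ exactly when $\mathrm{Re}\,w>-(k+k')/2$. For $n=2$ the fraction is identically $1$ (in your notation $A=B$ and $A^*=B^*$ when $n=2$), so $\omega=e^{i\theta}z^2$ and there is nothing to prove. For $n=1$ it suffices to show $\mathrm{Re}(zh_n''/h_n')>-3/2$, which follows at once from $h_n'=\phi'/(1-e^{i\theta}z^n)$, $\phi=h_n-g_n$, via the identity $n+2+2zh_n''/h_n'=2\left(1+z\phi''/\phi'\right)+n\left(1+e^{i\theta}z^n\right)/\left(1-e^{i\theta}z^n\right)$: the first term has positive real part because the strip mapping $\phi$ is convex, the second because $|e^{i\theta}z^n|<1$. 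This is how the paper proceeds --- it proves the more general Theorem 3.2, with a parameter $b\in\left(-1,-(n-2)/(n+2)\right]$ in place of your fixed $b=0$ plus slanting parameters, and recovers the present statement as the case $b=\alpha=\gamma=0$, $\eta=\psi\in[\pi/2,\pi)$; note that $b=0$ is admissible only for $n\leq 2$, which is exactly where the restriction $n=1,2$ enters. No polynomial factorization, no boundary-modulus argument, no zero-freeness claim is needed.
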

It has been observed that most of the results listed out above have been proved mostly by using \textit{\lq Cohn's rule\rq} or/and \textit{\lq Schur-Cohn's algorithm\rq} \cite{7} and computations involved are extremely cumbersome. The main objective of the present article is to present a technique, which is simple to apply and involves very less computations. Our technique enables us to prove more general results and all the results stated above, deduce as particular cases of the results obtained herein.
\section{Convolution of Convex Harmonic Functions}  
We begin this section by proving the following simple result.
\begin{lem}
Let $k$ and $k'$ be real numbers with $k'-k>0.$ Then for $w\in \mathbb{C}$, $$\left|\frac{k+w}{k'+w}\right|<1$$ if and only if $$Re(w)>-\left(\frac{k+k'}{2}\right).$$
\end{lem}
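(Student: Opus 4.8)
The plan is to reduce the modulus inequality to a real inequality by writing $w = x + iy$ with $x,y \in \mathbb{R}$ and squaring. Since the right-hand denominator $k'+w$ vanishes only when $w = -k'$ is real (and at that point the stated real-part condition would read $x > -(k+k')/2$, which fails because $-k' < -(k+k')/2$ precisely when $k' > k$), the inequality $\left|\frac{k+w}{k'+w}\right| < 1$ is equivalent to $|k+w| < |k'+w|$, and hence to $|k+w|^2 < |k'+w|^2$. This last step is the crucial reformulation, because squaring a nonnegative modulus inequality is a genuine equivalence rather than a one-way implication.

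Next I would expand both squared moduli directly. Writing $|k+w|^2 = (k+x)^2 + y^2$ and $|k'+w|^2 = (k'+x)^2 + y^2$, the two $y^2$ terms cancel, so the imaginary part of $w$ plays no role whatsoever and the inequality collapses to $(k+x)^2 < (k'+x)^2$. Expanding and cancelling the common $x^2$ term reduces this to a linear inequality in $x$, namely $k^2 + 2kx < k'^2 + 2k'x$, which rearranges to $(k-k')(k+k') < 2x(k'-k)$, i.e. $-(k'-k)(k+k') < 2x(k'-k)$.

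The final step, and the only place where the hypothesis $k'-k > 0$ is actually used, is to divide both sides by the positive quantity $k'-k$. Because this factor is strictly positive, the direction of the inequality is preserved, yielding $-(k+k') < 2x$, equivalently $x = \operatorname{Re}(w) > -\left(\frac{k+k'}{2}\right)$. I do not anticipate a genuine obstacle here; the entire argument is a chain of logical equivalences, so both implications of the ``if and only if'' are obtained simultaneously. The one point demanding mild care is the sign convention at the division step: had $k'-k$ been negative the inequality would reverse, so it is worth remarking explicitly that the hypothesis $k'-k>0$ is exactly what fixes the orientation and makes the clean half-plane characterization valid.
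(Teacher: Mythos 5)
Your proposal is correct and follows essentially the same route as the paper: both reduce $\left|\frac{k+w}{k'+w}\right|<1$ to the squared-modulus inequality $|k+w|^2<|k'+w|^2$, cancel the common term (the paper cancels $|w|^2$, you cancel $y^2$ and $x^2$ after writing $w=x+iy$), and divide the resulting linear inequality by the positive quantity $k'-k$. Your extra observation about the excluded point $w=-k'$ is a harmless refinement the paper leaves implicit.
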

\begin{proof}
We can easily see that $$\left|\frac{k+w}{k'+w}\right|<1 $$ if and only if
$$k^2+|w|^2+2k Re(w)<k'^2+|w|^2+2k'Re(w).$$ This is equivalent to $$Re(w)>-\left(\frac{k+k'}{2}\right)$$ as $k'-k>0.$
\end{proof}
We shall also need the following result whose proof runs on the same lines as that of Theorem 2 in \cite{3} and hence is omitted.
\begin{lem} Let $F_1=H_1 +\overline{ G_1}\in S_{H(\alpha)}, F_2=H_2+\overline{G_2} \in S_H$ be two harmonic functions with $H_1(z)+e^{-2i\alpha}G_1(z)=\frac{z}{1-ze^{i\alpha}}$ and $H_2(z)+ e^{-2i \gamma} G_2(z)=f(z),$ where $$zf'(z)= \frac{z}{(1+ze^{i(\eta + \gamma)})(1+ze^{-i(\eta - \gamma)})}$$ for some $\eta \in \mathbb{R}.$ Then $F_1 \tilde{\ast} F_2 \in S_H$ and is convex in the direction of $-(\alpha+\gamma),$ provided  $F_1 \tilde{\ast} F_2$ is locally univalent and sense preserving in $\mathbb{D}.$
\end{lem}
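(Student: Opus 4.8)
The plan is to derive the statement from the shearing theorem of Clunie and Sheil-Small together with the standard analytic criterion for convexity in the direction of the real axis. Write $h=H_1\ast H_2$ and $g=G_1\ast G_2$, so that $F_1\tilde{\ast}F_2=h+\overline{g}$. Since this mapping is assumed to be locally univalent and sense-preserving, the shearing theorem tells us it is univalent with image convex in the direction $-(\alpha+\gamma)$ if and only if the analytic function
\[ \Phi=e^{i(\alpha+\gamma)}h-e^{-i(\alpha+\gamma)}g \]
is univalent with image convex in the direction of the real axis. This is because multiplying the image of $F_1\tilde{\ast}F_2$ by $e^{i(\alpha+\gamma)}$ rotates a domain convex in the direction $-(\alpha+\gamma)$ onto one convex in the horizontal direction, and the analytic shear of the rotated harmonic map $e^{i(\alpha+\gamma)}(F_1\tilde{\ast}F_2)$ is precisely $\Phi$. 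The whole task therefore reduces to showing that $\Phi$ is convex in the direction of the real axis.

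For the second step I would make $\Phi'$ explicit. From $H_1+e^{-2i\alpha}G_1=z/(1-ze^{i\alpha})$ together with the dilatation $w_1=G_1'/H_1'$ one solves for $H_1'$ and $G_1'$, and likewise for $F_2$ from $H_2+e^{-2i\gamma}G_2=f$ with $w_2=G_2'/H_2'$. This writes the differences $D_1=H_1-e^{-2i\alpha}G_1$ and $D_2=H_2-e^{-2i\gamma}G_2$, at the level of derivatives, as the corresponding sum derivatives multiplied by the Cayley factors $\rho_1=(1-e^{-2i\alpha}w_1)/(1+e^{-2i\alpha}w_1)$ and $\rho_2=(1-e^{-2i\gamma}w_2)/(1+e^{-2i\gamma}w_2)$, each of positive real part. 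Substituting into the bilinear expansion of $h$ and $g$, using that convolution with $z/(1-ze^{i\alpha})$ amounts to the rotation $\chi(z)\mapsto e^{-i\alpha}\chi(e^{i\alpha}z)$, and inserting the prescribed factorization $zf'(z)=z/[(1+ze^{i(\eta+\gamma)})(1+ze^{-i(\eta-\gamma)})]$, I expect $\Phi'$ to collapse to a single expression of the form
\[ \Phi'(z)=\frac{p(z)}{(1+ze^{i\theta_1})(1+ze^{i\theta_2})}, \]
where $\theta_1,\theta_2$ are angles assembled from $\eta,\alpha,\gamma$ and $p$ is built from the factors $\rho_1,\rho_2$ and from the dilatation of the convolution.

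Finally, since $F_1\tilde{\ast}F_2$ is sense-preserving its dilatation has modulus less than one in $\mathbb{D}$, so the Cayley-type factors assembled in $p$ retain positive real part; thus $\Phi'$ is a function of positive real part divided by $(1+ze^{i\theta_1})(1+ze^{i\theta_2})$, which is exactly the standard sufficient condition for $\Phi$ to be locally univalent and convex in the direction of the real axis. Reading the shearing theorem backwards then yields that $F_1\tilde{\ast}F_2\in S_H$ is univalent and convex in the direction $-(\alpha+\gamma)$, as claimed. The main obstacle is the middle step: verifying that the two convolution contributions $H_1\ast H_2$ and $G_1\ast G_2$, once rotated and once the dilatation factors are inserted, really do recombine so that $\Phi'$ carries the prescribed denominator $(1+ze^{i\theta_1})(1+ze^{i\theta_2})$ with a numerator of positive real part. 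The rotation identity disposes of one of the two convolution terms, but the remaining genuine product $D_1\ast f$ must still be reduced by hand; this recombination is exactly where the explicit form of $zf'$ is indispensable and where the classical treatments resorted to Cohn's rule and the Schur--Cohn algorithm.
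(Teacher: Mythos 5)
Your skeleton is the right one, and it is in fact the skeleton of the proof the paper intends: the paper does not prove this lemma at all, but refers to Theorem 2 of \cite{3}, whose argument is precisely ``Clunie--Sheil-Small shearing plus Pommerenke's criterion that $Re\left[z\Phi_0'(z)/k(z)\right]>0$ for a starlike function $k(z)=z/\left[(1+ze^{i\theta_1})(1+ze^{i\theta_2})\right]$ forces convexity in the direction of the real axis'' \cite{9}. Your first step (reduction to showing that the shear $\Phi_0=H_1\ast H_2-e^{-2i(\alpha+\gamma)}G_1\ast G_2$, equivalently your $\Phi=e^{i(\alpha+\gamma)}\Phi_0$, is convex in the real direction) is correct. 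The genuine gap is that the ``middle step'' you defer is the entire content of the lemma, and it cannot be completed with the tools you list. Concretely, write $\sigma_\alpha(z)=z/(1-ze^{i\alpha})$, $D_1=H_1-e^{-2i\alpha}G_1$, $D_2=H_2-e^{-2i\gamma}G_2$; bilinearity collapses the shear to $\Phi_0=\frac{1}{2}\left[\sigma_\alpha\ast D_2+D_1\ast f\right]$. The rotation identity $\sigma_\alpha\ast\chi(z)=e^{-i\alpha}\chi(ze^{i\alpha})$ handles the first term and produces your Cayley factor $\rho_2$, as you say. But for the remaining term one has $z(D_1\ast f)'=f\ast(\rho_1k_\alpha)$ with $k_\alpha(z)=z/(1-ze^{i\alpha})^2$, and the required statement, that $f\ast(\rho_1k_\alpha)$ divided by $f\ast k_\alpha=zf'(ze^{i\alpha})$ has positive real part, simply does not follow from $Re\,\rho_1>0$ by hand: positive real part of a factor is not preserved under convolution. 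The missing idea is the Ruscheweyh--Sheil-Small lemma of \cite{22}: if $\varphi$ is convex and $k$ is starlike, then $(\varphi\ast pk)/(\varphi\ast k)$ maps $\mathbb{D}$ into the closed convex hull of $p(\mathbb{D})$. Applied with $\varphi=f$ (convex, since $zf'$ is starlike), $k=k_\alpha$ and $p=\rho_1$, it yields exactly the representation ``$z\Phi_0'=$ (positive real part) $\times$ (Pommerenke starlike function)'' that you were aiming for; without it, or a substitute, your argument stalls exactly where you admitted it would.

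Two of your supporting diagnoses are also wrong, and they matter. First, the positivity cannot come from the dilatation of $F_1\tilde{\ast}F_2$: that hypothesis is consumed entirely by the shearing theorem. If you try to use it you get $z\Phi'=e^{i(\alpha+\gamma)}\,z(H_1\ast H_2)'\left(1-e^{-2i(\alpha+\gamma)}\widetilde{W}\right)$, a product of two factors, and a product of positive-real-part functions need not have positive real part, so nothing follows; the correct sources of positivity are $|w_1|<1$ and $|w_2|<1$ (the factors $F_1,F_2$ are themselves sense-preserving) together with the lemma of \cite{22}. Second, Cohn's rule and the Schur--Cohn algorithm are not where the classical treatments spend their effort on this lemma: in this literature those tools serve only to verify the local-univalence hypothesis for particular polynomial dilatations in the theorems that \emph{apply} the lemma (the very step the present paper replaces by Lemma 2.1), whereas in the lemma itself local univalence is assumed. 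Finally, the direction bookkeeping is not automatic: Pommerenke's criterion gives convexity in the direction of the real axis, so to reach the direction $-(\alpha+\gamma)$ one must rotate the variable as well as the values, i.e. work with $\Psi(z)=e^{i(\alpha+\gamma)}\Phi_0(ze^{-i(\alpha+\gamma)})$; it is precisely the $\gamma$-shifted exponents in $zf'(z)=z/\left[(1+ze^{i(\eta+\gamma)})(1+ze^{-i(\eta-\gamma)})\right]$ that make the rotated denominator come out as $(1+ze^{i\eta})(1+ze^{-i\eta})$ with no stray unimodular constant in front --- a verification your sketch never performs.
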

Now, consider a family of harmonic mappings $T_{(\eta,\gamma)}=R_{(\eta,\gamma)}+\overline{S_{(\eta,\gamma)}},$ given by
\begin{equation} \label{eq:7}
R_{(\eta,\gamma)}(z)+ e^{-2i \gamma} S_{(\eta,\gamma)}(z)=f(z),\hspace{1cm} \frac{S_{(\eta,\gamma)}'(z)}{R_{(\eta,\gamma)}'(z)}=e^{i\theta} z^n,
\end{equation}
where $f$ is the analytic mapping in $\mathbb{D}$ given by $$zf'(z)= \frac{z}{(1+ze^{i(\eta + \gamma)})(1+ze^{-i(\eta - \gamma)})}$$ for some $\eta \in \mathbb{R}.$ Pommerenke \cite{9} has shown that $zf'$ is starlike in $\mathbb{D}.$

\begin{thm}
Let $F_{(a,\alpha)}=H_{(a,\alpha)} +\overline{ G_{(a,\alpha)}} \in S_{H(\alpha)}$ be given by \eqref{eq:4}. Then $F_{(a,\alpha)} \tilde{\ast} T_{(\eta,\gamma)} \in S_H$ and is convex in the direction of $-(\alpha+\gamma)$ for  $a\in \left[\frac{n-2}{n+2},1\right).$ Here,  $T_{(\eta,\gamma)}\in S_H$ is given by \eqref{eq:7}. 
\end{thm}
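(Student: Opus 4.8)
The plan is to let Lemma~2.2 do all of the ``soft'' work and to reduce the entire theorem to a single dilatation estimate, which is then settled by Lemma~2.1 in place of Cohn's rule. First I would observe that $F_{(a,\alpha)}$ and $T_{(\eta,\gamma)}$ are exactly the two mappings $F_1,F_2$ appearing in Lemma~2.2: by \eqref{eq:4} the analytic combination of $F_{(a,\alpha)}$ is $H_{(a,\alpha)}+e^{-2i\alpha}G_{(a,\alpha)}=z/(1-ze^{i\alpha})$, and by \eqref{eq:7} that of $T_{(\eta,\gamma)}$ is $R_{(\eta,\gamma)}+e^{-2i\gamma}S_{(\eta,\gamma)}=f$ with $zf'$ of the required form. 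Hence Lemma~2.2 already gives $F_{(a,\alpha)}\tilde{\ast}T_{(\eta,\gamma)}\in S_H$ and convexity in the direction $-(\alpha+\gamma)$, \emph{provided} the convolution is locally univalent and sense-preserving. Thus the whole content of the theorem is the statement that the dilatation $\widetilde{\omega}=(G_{(a,\alpha)}\ast S_{(\eta,\gamma)})'/(H_{(a,\alpha)}\ast R_{(\eta,\gamma)})'$ satisfies $|\widetilde{\omega}(z)|<1$ on $\mathbb{D}$.

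Next I would compute $\widetilde{\omega}$ explicitly. Differentiating the defining relations gives $H_{(a,\alpha)}'(z)=(1-aze^{i\alpha})/[(1+a)(1-ze^{i\alpha})^3]$ and $G_{(a,\alpha)}'(z)=e^{2i\alpha}(a-ze^{i\alpha})/[(1+a)(1-ze^{i\alpha})^3]$, which integrate to the rational closed forms $H_{(a,\alpha)}(z)=z(2-(1+a)ze^{i\alpha})/[2(1+a)(1-ze^{i\alpha})^2]$ and a companion expression of the same type for $G_{(a,\alpha)}$. Writing $w=ze^{i\alpha}$, $R=R_{(\eta,\gamma)}$, $S=S_{(\eta,\gamma)}$, and using the elementary Hadamard identities $\phi\ast z/(1-xz)^2=z\phi'(xz)$ and $\phi\ast z^2/(1-xz)^2=z\phi'(xz)/x-\phi(xz)/x^2$ with $x=e^{i\alpha}$, one finds
\[
(H_{(a,\alpha)}\ast R)'(z)=\frac{R'(w)}{2(1+a)}\bigl[(1-a)Q(w)+2\bigr],\qquad Q(w):=\frac{wR''(w)}{R'(w)}.
\]
Using $S'(w)=e^{i\theta}w^nR'(w)$ the co-analytic factor collapses to $(G_{(a,\alpha)}\ast S)'(z)=e^{i(\theta+2\alpha)}w^nR'(w)\bigl[m-(1-a)Q(w)\bigr]/[2(1+a)]$ with $m:=a(n+2)-n$. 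Dividing, the common weight $R'(w)/[2(1+a)]$ cancels and, since $e^{i2\alpha}w^n=e^{i(n+2)\alpha}z^n$,
\[
\widetilde{\omega}(z)=e^{i\theta}e^{i(n+2)\alpha}\,z^n\,\frac{m-(1-a)Q(w)}{2+(1-a)Q(w)}.
\]

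It then remains to bound the M\"obius factor, and this is where Lemma~2.1 enters. Writing $R'(w)=f'(w)/(1+e^{i(\theta-2\gamma)}w^n)$ with $f'(w)=1/[(1+we^{i(\eta+\gamma)})(1+we^{-i(\eta-\gamma)})]$, a logarithmic differentiation expresses $Q(w)$ as a sum of three terms of the shape $-\zeta/(1+\zeta)$ (two from $f'$, one weighted by $n$ from the dilatation factor), each with $|\zeta|<1$ on $\mathbb{D}$. Since $\operatorname{Re}\bigl(\zeta/(1+\zeta)\bigr)<1/2$ whenever $|\zeta|<1$, this yields the key estimate $\operatorname{Re}Q(w)>-(n+2)/2$ throughout $\mathbb{D}$. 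I then apply Lemma~2.1 with $k=-m$, $k'=2$, $W=(1-a)Q(w)$: the requirement $k'-k=2+m=(n+2)\bigl(a-\tfrac{n-2}{n+2}\bigr)\ge 0$ is precisely the constraint $a\ge(n-2)/(n+2)$, and for $a>(n-2)/(n+2)$ the lemma gives modulus $<1$ exactly when $\operatorname{Re}\bigl((1-a)Q\bigr)>(m-2)/2=-(1-a)(n+2)/2$, i.e. $\operatorname{Re}Q>-(n+2)/2$, which is just what was proved; at the endpoint $a=(n-2)/(n+2)$ one has $m=-2$ and the M\"obius factor is identically $-1$. In either case $|\widetilde{\omega}(z)|\le|z|^n<1$. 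The same estimate shows $\operatorname{Re}\bigl((1-a)Q(w)+2\bigr)>0$, so $(H_{(a,\alpha)}\ast R)'$ never vanishes; hence the convolution is locally univalent and sense-preserving and Lemma~2.2 finishes the proof.

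I expect the main obstacle to be the reduction of $\widetilde{\omega}$ to the single clean M\"obius factor displayed above: organizing the two Hadamard products so that the weight $R'(w)$ cancels and the dilatation $e^{i\theta}z^n$ of $T_{(\eta,\gamma)}$ contributes only the explicit power $z^n$ together with the numerical shift $m=a(n+2)-n$. This is exactly the step that forced the cumbersome Cohn/Schur--Cohn computations in the earlier papers. Once the M\"obius form is available, the remaining ingredients---the half-plane bound $\operatorname{Re}Q>-(n+2)/2$ and the observation that the threshold $(m-2)/2$ matches $a=(n-2)/(n+2)$ through Lemma~2.1---are short and transparent, which is precisely the economy the paper advertises.
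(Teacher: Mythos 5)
Your proposal is correct and follows essentially the same route as the paper's own proof: reduce everything to the dilatation bound via Lemma~2.2, collapse the dilatation to a single M\"obius factor in $Q(w)=wR_{(\eta,\gamma)}''(w)/R_{(\eta,\gamma)}'(w)$, and invoke Lemma~2.1 so that the whole theorem rests on $\operatorname{Re}Q>-(n+2)/2$, which you establish by the same half-plane estimates (you bound the two factors of $f'$ directly where the paper instead cites convexity of $f$ via Pommerenke --- a cosmetic difference). Your additional observation that $(H_{(a,\alpha)}\ast R_{(\eta,\gamma)})'$ never vanishes tightens the local-univalence step but does not alter the argument.
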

\begin{proof} From equations in \eqref{eq:4}, using \textit{\lq shear technique\rq} (see \cite{4}), we get $$H_{(a,\alpha)}(z)=\frac{1}{2}\left[\frac{z}{(1-ze^{i\alpha})}+\left(\frac{1-a}{1+a}\right)\frac{z}{(1-ze^{i\alpha})^2}\right]$$ and $$G_{(a,\alpha)}(z)=\frac{1}{2}\left[\frac{ze^{2i\alpha}}{(1-ze^{i\alpha})}-\left(\frac{1-a}{1+a}\right)\frac{ze^{2i\alpha}}{(1-ze^{i\alpha})^2}\right].$$ In view of Lemma 2.2, it is enough to prove that dilatation $W(z)=\displaystyle\frac{( G_{(a,\alpha)} \ast S_{(\eta,\gamma)})'(z)}{(H_{(a,\alpha)} \ast R_{(\eta,\gamma)})'(z)}$ of $F_{(a,\alpha)}\tilde{\ast} T_{(\eta,\gamma)}$ satisfies $|W(z)|<1$ in $\mathbb{D}.$ As $$W(z)=e^{2i\alpha} \displaystyle\frac{( G_{(a,0)} \ast S_{(\eta,\gamma)})'(ze^{i\alpha})}{(H_{(a,0)} \ast R_{(\eta,\gamma)})'(ze^{i\alpha})}=e^{2i\alpha}\widehat{w}(ze^{i\alpha})(say),$$ it is therefore enough to prove that $|\widehat{w}(z)|<1$ in $\mathbb{D}.$ Further, note that \begin{equation} \label{eq:8}
\widehat{w}(z)=\frac{2aS_{(\eta,\gamma)}'(z)-(1-a)zS_{(\eta,\gamma)}''(z)}{2R_{(\eta,\gamma)}'(z)+(1-a)zR_{(\eta,\gamma)}''(z)}.
\end{equation}
From \eqref{eq:7}, we get $$S_{(\eta,\gamma)}'(z)=e^{i\theta}z^n R_{(\eta,\gamma)}'(z)$$ and so $$S_{(\eta,\gamma)}''(z)=e^{i\theta}z^n R_{(\eta,\gamma)}''(z)+n e^{i\theta}z^{n-1} R_{(\eta,\gamma)}'(z).$$ Putting these values of $S_{(\eta,\gamma)}'$ and $S_{(\eta,\gamma)}''$ in equation \eqref{eq:8}, we have \begin{equation} \label{eq:9}
\widehat{w}(z)=-e^{i\theta}z^n\displaystyle \left[\frac{\frac{n-(n+2)a}{1-a}+\frac{zR_{(\eta,\gamma)}''(z)}{R_{(\eta,\gamma)}'(z)}}{\frac{2}{1-a}+\frac{zR_{(\eta,\gamma)}''(z)}{R_{(\eta,\gamma)}'(z)}}\right].
\end{equation}
Now, $|\widehat{w}(z)|<1$ if  \begin{equation} \label{eq:10}
\displaystyle\left|\frac{\frac{n-(n+2)a}{1-a}+\frac{zR_{(\eta,\gamma)}''(z)}{R_{(\eta,\gamma)}'(z)}}{\frac{2}{1-a}+\frac{zR_{(\eta,\gamma)}''(z)}{R_{(\eta,\gamma)}'(z)}}\right|\leq 1. \end{equation}
For $a=\frac{n-2}{n+2},$ we note that left hand side of \eqref{eq:10} is equal to 1 and for  $a\in \left(\frac{n-2}{n+2},1\right),$ $$\left(\frac{2}{1-a}\right)-\left(\frac{n-(n+2)a}{1-a}\right)>0.$$ Therefore, in view of Lemma 2.1, it is sufficient to prove that
 \begin{equation} \label{eq:10.1}
 Re\left\{\frac{zR_{(\eta,\gamma)}''(z)}{R_{(\eta,\gamma)}'(z)}\right\}>-\frac{n+2}{2}.\end{equation}
 From \eqref{eq:7}, we have $$R_{(\eta,\gamma)}'(z)=\frac{f'(z)}{1+e^{i(\theta-2\gamma)}z^n},$$  which gives $$\frac{zR_{(\eta,\gamma)}''(z)}{R_{(\eta,\gamma)}'(z)}=\frac{zf''(z)}{f'(z)}-\frac{ne^{i(\theta-2\gamma)}z^n}{1+e^{i(\theta-2\gamma)}z^n},$$
 or equivalently $$n+2+\frac{2zR_{(\eta,\gamma)}''(z)}{R_{(\eta,\gamma)}'(z)}=2\left(1+\frac{zf''(z)}{f'(z)}\right)+n\left(\frac{1-e^{i(\theta-2\gamma)}z^n}{1+e^{i(\theta-2\gamma)}z^n}\right).$$ Now, $Re\left(1+\frac{zf''(z)}{f'(z)}\right)>0$ in $\mathbb{D}$ because $f$ is convex in $\mathbb{D}$ (as $zf'$ is starlike in $\mathbb{D}$) and for $|z|<1,$ we have $Re\left(\frac{1-e^{i(\theta-2\gamma)}z^n}{1+e^{i(\theta-2\gamma)}z^n}\right)>0.$ This gives $$Re\left\{n+2+\frac{2zR_{(\eta,\gamma)}''(z)}{R_{(\eta,\gamma)}'(z)}\right\}>0$$ in $\mathbb{D},$ which in turn, shows that \eqref{eq:10.1} is true.\end{proof}
 \begin{rem} By assigning suitable values to the parameters $\alpha,\gamma$ and $\eta$ in Theorem  2.3, we easily deduce most of the results listed in Section 1 as under.
 \begin{enumerate}
\item If we put $\eta=\pi$ in \eqref{eq:7}, we get $$R_{(\pi,\gamma)}(z)+ e^{-2i \gamma} S_{(\pi,\gamma)}(z)=\frac{z}{1-ze^{i\gamma}}$$
and Theorem 2.3 reduces to Theorem 1.8.
\item   On taking $\alpha=\gamma=0$ and $0<\eta <\pi$ in Theorem 2.3, we get Theorem 1.7. In addition, if we take $a=0$ also, then we get Theorem 1.4.
\item By taking $\alpha=a=0$ and $\eta =\pi$ in Theorem 2.3, we get Theorem 1.5.
\item Setting $\alpha=\gamma=0$ and $\eta =\pi$ in Theorem 2.3 gives Theorem 1.6,  which in turn reduce to Theorem 1.3, if we  substitute $a=0$ also.
\end{enumerate}
\end{rem}
\section{Convolution of Harmonic Functions convex in one direction}  \label{sec:4}
In this section, we study convolutions of harmonic functions convex in one direction. To proceed further, we need following result whose proof is omitted as it follows similarly as the proof of Theorem 2 in \cite{3}. 
\begin{lem}
Let $f_1=h_1 +\overline{ g_1}\in S_{H}$ and $f_2=h_2+\overline{g_2} \in S_H$ be two harmonic functions with $h_1(z)-e^{-2i\alpha}g_1(z)=\frac{z}{1-ze^{i\alpha}}, \alpha \in [0,2\pi)$ and $h_2(z)- e^{-2i \gamma} g_2(z)=f(z),$ where, $$zf'(z)= \frac{z}{(1+ze^{i(\eta + \gamma)})(1+ze^{-i(\eta + \gamma)})}$$ for some $\eta \in \mathbb{R}.$ Then $f_1 \tilde{\ast} f_2 \in S_H$ and is convex in the direction of $-(\alpha+\gamma),$ provided  $f_1 \tilde{\ast} f_2$ is locally univalent and sense preserving in $\mathbb{D}.$
\end{lem}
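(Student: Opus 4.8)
The plan is to reproduce, with the appropriate change of normalization, the shearing argument behind Lemma 2.2 (that is, the proof of Theorem 2 in \cite{3}). Writing $f_1\tilde{\ast}f_2=(h_1\ast h_2)+\overline{(g_1\ast g_2)}$ and recalling that this convolution is assumed locally univalent and sense preserving, the Clunie--Sheil-Small shearing theorem \cite{4} guarantees that $f_1\tilde{\ast}f_2$ is univalent and convex in the direction $-(\alpha+\gamma)$ precisely when the single analytic function
$$\Phi(z)=(h_1\ast h_2)(z)-e^{-2i(\alpha+\gamma)}(g_1\ast g_2)(z)$$
maps $\mathbb{D}$ onto a domain convex in the direction $-(\alpha+\gamma)$. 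Thus the whole problem is reduced to a statement about one analytic map, and the sense-preserving hypothesis is exactly what licenses this reduction.

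First I would normalize the target direction: a domain is convex in the direction $-(\alpha+\gamma)$ iff its image under multiplication by $e^{i(\alpha+\gamma)}$ is convex in the direction of the real axis, so it suffices to treat $e^{i(\alpha+\gamma)}\Phi$. Next I would feed in the two defining relations. The first shear $h_1-e^{-2i\alpha}g_1=\frac{z}{1-ze^{i\alpha}}$ is, up to the rotation $z\mapsto ze^{i\alpha}$, the Hadamard identity $\frac{z}{1-z}$, while the second shear $f$ is a convex analytic function, since $zf'$ is starlike by the result of Pommerenke quoted after \eqref{eq:7}. Using the identities $\frac{z}{1-z}\ast\varphi=\varphi$ and $\frac{z}{(1-z)^2}\ast\varphi=z\varphi'$ together with their rotated forms, and the relations $g_i'=\omega_i h_i'$ with $|\omega_i|<1$, I would rewrite $\bigl(e^{i(\alpha+\gamma)}\Phi\bigr)'$ in a form that isolates $f'$ and $zf''$.

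The concluding step is a Royster--Ziegler positivity test. The natural test polynomial is the denominator of $zf'$, namely $Q(z)=(1+ze^{i(\eta+\gamma)})(1+ze^{-i(\eta+\gamma)})$, because $f'=1/Q$ and so multiplying $\bigl(e^{i(\alpha+\gamma)}\Phi\bigr)'$ by $Q$ clears the singular factor coming from $f$. After this cancellation the required inequality $\mathrm{Re}\bigl\{Q(z)\,(e^{i(\alpha+\gamma)}\Phi)'(z)\bigr\}\ge 0$ on $\mathbb{D}$ should reduce, exactly as in the proof of Theorem 2.3, to the two robust positivities $\mathrm{Re}\{1+zf''(z)/f'(z)\}>0$ (convexity of $f$) and a positivity of the form $\mathrm{Re}\{(1-\zeta)/(1+\zeta)\}>0$ for $|\zeta|<1$ applied to the dilatations.

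I expect the genuine obstacle to be the middle computation. Unlike the analytic case, the shear $\Phi$ of a harmonic convolution is \emph{not} the Hadamard product of the two shears: the cross terms $h_1\ast g_2$ and $g_1\ast h_2$ survive, so $\Phi$ still depends on the unspecified dilatations $\omega_1,\omega_2$. The real content of the argument, and the part that must be checked with care, is that this dilatation dependence is harmless, in the sense that the Royster--Ziegler quantity picks up only Möbius images of $\omega_1,\omega_2$ carrying $\mathbb{D}$ into the right half-plane, added to the contribution of the convex factor $f$, so that the positivity persists for every admissible pair $\omega_1,\omega_2$. This is precisely the computation carried out in Theorem 2 of \cite{3} and in Lemma 2.2; the only substantive differences here are the sign in the normalization $h_i-e^{-2i\psi_i}g_i$ and the symmetric denominator $(1+ze^{i(\eta+\gamma)})(1+ze^{-i(\eta+\gamma)})$, neither of which disturbs the structure of the proof.
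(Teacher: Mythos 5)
Your overall skeleton --- reduce via the Clunie--Sheil-Small shearing theorem to the analytic shear $\Phi=h_1\ast h_2-e^{-2i(\alpha+\gamma)}\,g_1\ast g_2$ and then verify a Royster--Ziegler/Pommerenke positivity test --- is indeed the route intended by the paper (which omits the proof, deferring to Theorem 2 of \cite{3}). But your plan for the middle computation, which you yourself identify as ``the real content,'' is not workable as described, and deferring it to \cite{3} leaves the attempt as a citation rather than a proof. The identities you propose, $\frac{z}{(1-z)^2}\ast\varphi=z\varphi'$ and an eventual appeal to $\mathrm{Re}\{1+zf''(z)/f'(z)\}>0$, are the tools of Theorems 2.3 and 3.2, where the half-plane factor has \emph{explicitly known} analytic and co-analytic parts obtained by shearing. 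In the present lemma only the combination $h_1-e^{-2i\alpha}g_1$ is prescribed; $h_1$ and $g_1$ separately (equivalently the dilatation $\omega_1$) are arbitrary, so there is nothing explicit to convolve against and no way to ``isolate $f'$ and $zf''$.'' The device that actually tames the cross terms is the identity
\begin{equation*}
2\Phi=(h_1-e^{-2i\alpha}g_1)\ast(h_2+e^{-2i\gamma}g_2)+(h_1+e^{-2i\alpha}g_1)\ast(h_2-e^{-2i\gamma}g_2),
\end{equation*}
whose first term equals $e^{-i\alpha}(h_2+e^{-2i\gamma}g_2)(ze^{i\alpha})$ because $\frac{z}{1-ze^{i\alpha}}$ is the rotated convolution identity, and whose second term is handled through $z(h_1+e^{-2i\alpha}g_1)'=p_1(z)\,\frac{z}{(1-ze^{i\alpha})^2}$, where $p_1=(1+e^{-2i\alpha}\omega_1)/(1-e^{-2i\alpha}\omega_1)$ has positive real part. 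This identity, absent from your proposal, is the core of the proof; the dilatations become harmless only after it is in place.

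Your concluding certificate is also quantitatively wrong. Differentiating the first term above yields $p_2(ze^{i\alpha})f'(ze^{i\alpha})$, with $p_2=(1+e^{-2i\gamma}\omega_2)/(1-e^{-2i\gamma}\omega_2)$ of positive real part, so the rotation by $\alpha$ enters the argument of $f'$; the quadratic that clears this factor is the rotated one, $(1+ze^{i(\eta+\gamma+\alpha)})(1+ze^{i(\alpha-\eta-\gamma)})$ --- in Pommerenke's formulation the comparison starlike function is $q(z)=zf'(ze^{i\alpha})$, not $zf'(z)$ --- and the Royster--Ziegler test further requires a compensating phase $e^{i\mu}$ tied to the slants. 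With your unrotated $Q$ and no phase, the claimed inequality $\mathrm{Re}\{Q(z)\,(e^{i(\alpha+\gamma)}\Phi)'(z)\}\ge 0$ fails already at $z=0$: take $\omega_1\equiv\omega_2\equiv 0$, so that $f_1=\frac{z}{1-ze^{i\alpha}}$, $f_2=f$, and $\Phi(z)=e^{-i\alpha}f(ze^{i\alpha})$; then $\Phi'(0)=1$ and the test reads $\cos(\alpha+\gamma)\ge 0$, false whenever $\alpha+\gamma\in(\pi/2,3\pi/2)$, even though in this example $\Phi$ is a convex map and the conclusion of the lemma holds trivially. So the positivity you hope to verify is simply not true for your choice of kernel; the kernel must be rotated and re-phased before the argument of \cite{3} goes through.
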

Now, consider harmonic mapping $f_{b,\alpha}=h_{b,\alpha} +\overline{ g_{b,\alpha}},b\in(-1,1),\alpha \in [0,2\pi)$ with 
\begin{equation} \label{eq:11}
h_{b,\alpha}(z)-e^{-2i\alpha}g_{b,\alpha}(z)=\frac{z}{1-ze^{i\alpha}}, \hspace{0.5cm}
\frac{g_{b,\alpha}'(z)}{h_{b,\alpha}'(z)}=e^{2i\alpha}\left(\frac{b+ze^{i\alpha}}{1+bze^{i\alpha}}\right)
\end{equation}
and another harmonic mapping $t_{\eta,\gamma}=r_{\eta,\gamma}+\overline{s_{\eta,\gamma}},\gamma \in [0,2\pi)$ given by 
\begin{equation} \label{eq:12}
r_{\eta,\gamma}(z)- e^{-2i \gamma} s_{\eta,\gamma}(z)=f(z),\hspace{1cm} \frac{s_{\eta,\gamma}'(z)}{r_{\eta,\gamma}'(z)}=e^{i\theta} z^n, \theta \in \mathbb{R}, n \in \mathbb{N},
\end{equation}
where, $$zf'(z)= \frac{z}{(1+ze^{i(\eta + \gamma)})(1+ze^{-i(\eta - \gamma)})}$$ for some $\eta \in \mathbb{R}.$ As the analytic mappings $z/(1-ze^{i\alpha}),\alpha \in [0,2\pi)$ and $f$ are both convex, so $f_{b,\alpha}$ and $t_{\eta,\gamma}$ are in $S_H$ (see \cite{4}). Now we present the following result. \\

\begin{thm} Let $f_{b,\alpha}=h_{b,\alpha} +\overline{ g_{b,\alpha}}\in S_{H}$ be given by \eqref{eq:11} and $t_{\eta,\gamma}=r_{\eta,\gamma}+\overline{s_{\eta,\gamma}} \in S_H$ be given by \eqref{eq:12}. Then $f_{b,\alpha} \tilde{\ast} t_{\eta,\gamma} \in S_H$ and is convex in the direction of $-(\alpha+\gamma)$ for all $b\in \left(-1,\frac{-(n-2)}{n+2}\right].$
\end{thm}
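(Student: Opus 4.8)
The plan is to run the argument in complete parallel with the proof of Theorem 2.3, taking care of the sign changes that distinguish the convex-in-one-direction data \eqref{eq:11}--\eqref{eq:12} from the right half plane data. First I would apply the \emph{shear technique} to \eqref{eq:11}: differentiating $h_{b,\alpha}(z)-e^{-2i\alpha}g_{b,\alpha}(z)=z/(1-ze^{i\alpha})$ and substituting the dilatation $g_{b,\alpha}'/h_{b,\alpha}'=e^{2i\alpha}(b+ze^{i\alpha})/(1+bze^{i\alpha})$ lets one solve for $h_{b,\alpha}'$ and $g_{b,\alpha}'$ and integrate. For $\alpha=0$ the resulting primitives are
\[
h_{b,0}(z)=\frac{1}{2}\,\frac{z}{1-z}+\frac{1+b}{2(1-b)}\,\frac{z}{(1-z)^{2}},\qquad
g_{b,0}(z)=-\frac{1}{2}\,\frac{z}{1-z}+\frac{1+b}{2(1-b)}\,\frac{z}{(1-z)^{2}}.
\]
Since $t_{\eta,\gamma}$ is precisely the mapping $f_{2}$ appearing in Lemma 3.1 (with $f$ the Pommerenke map, so that $zf'$ is starlike and $f$ is convex), Lemma 3.1 reduces the whole statement to showing that $f_{b,\alpha}\tilde{\ast}t_{\eta,\gamma}$ is locally univalent and sense preserving, i.e.\ that its dilatation $W(z)=(g_{b,\alpha}\ast s_{\eta,\gamma})'/(h_{b,\alpha}\ast r_{\eta,\gamma})'$ satisfies $|W(z)|<1$ in $\mathbb{D}$.

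Next, exactly as in Theorem 2.3, a rotation strips off $\alpha$: writing $W(z)=e^{2i\alpha}\widehat{w}(ze^{i\alpha})$, it suffices to prove $|\widehat{w}(z)|<1$. Using the convolution identities $\frac{z}{1-z}\ast\phi=\phi$ and $\frac{z}{(1-z)^{2}}\ast\phi=z\phi'$ on the explicit forms above, together with $s_{\eta,\gamma}'=e^{i\theta}z^{n}r_{\eta,\gamma}'$ (and hence $s_{\eta,\gamma}''=e^{i\theta}z^{n}r_{\eta,\gamma}''+ne^{i\theta}z^{n-1}r_{\eta,\gamma}'$), I would reduce $\widehat{w}$ to the Möbius form
\[
\widehat{w}(z)=e^{i\theta}z^{n}\,
\frac{\dfrac{n+(n+2)b}{1+b}+\dfrac{zr_{\eta,\gamma}''(z)}{r_{\eta,\gamma}'(z)}}
{\dfrac{2}{1+b}+\dfrac{zr_{\eta,\gamma}''(z)}{r_{\eta,\gamma}'(z)}}.
\]
Because $|e^{i\theta}z^{n}|<1$ in $\mathbb{D}$, it is enough to bound the bracketed quotient by $1$. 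At the endpoint $b=-(n-2)/(n+2)$ the two constants coincide and the quotient is identically $1$; for $b\in(-1,-(n-2)/(n+2))$ one has $\frac{2}{1+b}-\frac{n+(n+2)b}{1+b}=\frac{2-n-(n+2)b}{1+b}>0$, so Lemma 2.1 applies with $k=\frac{n+(n+2)b}{1+b}$, $k'=\frac{2}{1+b}$ and $w=zr_{\eta,\gamma}''/r_{\eta,\gamma}'$. Since $(k+k')/2=(n+2)/2$, the bound collapses to the single inequality $\operatorname{Re}\{zr_{\eta,\gamma}''/r_{\eta,\gamma}'\}>-(n+2)/2$.

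This last inequality is then settled exactly as in Theorem 2.3. From \eqref{eq:12} one gets $r_{\eta,\gamma}'=f'/(1-e^{i(\theta-2\gamma)}z^{n})$, and logarithmic differentiation yields
\[
n+2+\frac{2zr_{\eta,\gamma}''(z)}{r_{\eta,\gamma}'(z)}
=2\left(1+\frac{zf''(z)}{f'(z)}\right)
+n\left(\frac{1+e^{i(\theta-2\gamma)}z^{n}}{1-e^{i(\theta-2\gamma)}z^{n}}\right).
\]
Both summands on the right have positive real part in $\mathbb{D}$: the first because $f$ is convex (as $zf'$ is starlike by Pommerenke \cite{9}), and the second because $|e^{i(\theta-2\gamma)}z^{n}|<1$ sends $(1+\zeta)/(1-\zeta)$ into the right half plane. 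Hence $\operatorname{Re}\{zr_{\eta,\gamma}''/r_{\eta,\gamma}'\}>-(n+2)/2$, completing the verification.

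I expect the main difficulty here to be bookkeeping rather than conceptual. The delicate point is tracking the sign reversals coming from the minus signs in \eqref{eq:11} and \eqref{eq:12}: one must check that the algebra lands $\widehat{w}$ in the precise Möbius form to which Lemma 2.1 applies, and—crucially—that the positivity requirement $k'-k>0$ carves out the interval $b\in(-1,-(n-2)/(n+2)]$ (the reflection of the interval $[(n-2)/(n+2),1)$ in Theorem 2.3) rather than its complement, with the endpoint handled separately by the degenerate case where the quotient is constant.
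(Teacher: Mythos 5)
Your proposal is correct and follows essentially the same route as the paper's own proof: shear \eqref{eq:11}, reduce via Lemma 3.1 and a rotation to the dilatation $\widehat{w}$, cast it in the Möbius form $e^{i\theta}z^{n}\bigl(k+zr_{\eta,\gamma}''/r_{\eta,\gamma}'\bigr)/\bigl(k'+zr_{\eta,\gamma}''/r_{\eta,\gamma}'\bigr)$ with $(k+k')/2=(n+2)/2$, apply Lemma 2.1, and finish with the convexity of $f$ plus the right-half-plane image of $(1+\zeta)/(1-\zeta)$. All the key computations (the endpoint $b=-(n-2)/(n+2)$, the positivity condition $k'-k>0$ on $\bigl(-1,-(n-2)/(n+2)\bigr)$, and the identity for $n+2+2zr_{\eta,\gamma}''/r_{\eta,\gamma}'$) match the paper's.
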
 
\begin{proof} By using \textit{\lq shear technique \rq} (see \cite{4}), from \eqref{eq:11},  we get $$h_{b,\alpha}(z)=\frac{1}{2}\left[\left(\frac{1+b}{1-b}\right)\frac{z}{(1-ze^{i\alpha})^2}+\frac{z}{(1-ze^{i\alpha})}\right]$$ and $$g_{b,\alpha}(z)=\frac{1}{2}\left[\left(\frac{1+b}{1-b}\right)\frac{ze^{2i\alpha}}{(1-ze^{i\alpha})^2}-\frac{ze^{2i\alpha}}{(1-ze^{i\alpha})}\right].$$ If $W$ denotes the dilatation function of $f_{b,\alpha}\tilde{\ast} t_{\eta,\gamma}=h_{b,\alpha} \ast r_{\eta,\gamma}+\overline{g_{b,\alpha} \ast s_{\eta,\gamma}},$ we get $$W(z)=\displaystyle\frac{ (g_{b,\alpha} \ast s_{\eta,\gamma})'(z)}{(h_{b,\alpha} \ast r_{\eta,\gamma})'(z)}.$$ In view of Lemma 3.1, we only need to show that $|W(z)|<1$ in $\mathbb{D}.$ As $$W(z)=e^{2i\alpha} \displaystyle\frac{( g_{b,0} \ast s_{\eta,\gamma})'(ze^{i\alpha})}{(h_{b,0} \ast r_{\eta,\gamma})'(ze^{i\alpha})}=e^{2i\alpha}\widehat{w}(ze^{i\alpha})(say),$$ it is therefore enough to prove that $|\widehat{w}(z)|<1$ in $\mathbb{D}.$ Further, note that 
\begin{equation} \label{eq:13}
\widehat{w}(z)=\frac{2bs_{\eta,\gamma}'(z)+(1+b)zs_{\eta,\gamma}''(z)}{2r_{\eta,\gamma}'(z)+(1+b)zr_{\eta,\gamma}''(z)}.
\end{equation}
From \eqref{eq:12}, we have $$s_{\eta,\gamma}'(z)=e^{i\theta}z^n r_{\eta,\gamma}'(z)$$ and therefore $$s_{\eta,\gamma}''(z)=e^{i\theta}z^n r_{\eta,\gamma}''(z)+n e^{i\theta}z^{n-1} r_{\eta,\gamma}'(z).$$ Putting these values of $s_{\eta,\gamma}'$ and $s_{\eta,\gamma}''$ in equation \eqref{eq:13}, we have \begin{equation} \label{eq:14}
\widehat{w}(z)=e^{i\theta}z^n\displaystyle \left[\frac{\frac{n+(n+2)b}{1+b}+\frac{zr_{\eta,\gamma}''(z)}{r_{\eta,\gamma}'(z)}}{\frac{2}{1+b}+\frac{zr_{\eta,\gamma}''z)}{r_{\eta,\gamma}'(z)}}\right].
\end{equation}
So, it is enough to prove that \begin{equation} \label{eq:15}
\displaystyle\left|\frac{\frac{n+(n+2)b}{1+b}+\frac{zr_{\eta,\gamma}''(z)}{r_{\eta,\gamma}'(z)}}{\frac{2}{1+b}+\frac{zr_{\eta,\gamma}''(z)}{r_{\eta,\gamma}'(z)}}\right|\leq 1. \end{equation}
We get equality in \eqref{eq:15} for $b=-\frac{n-2}{n+2}$ and for $b\in \left(-1,-\frac{n-2}{n+2}\right),$ $$\left(\frac{2}{1+b}\right)-\left(\frac{n+(n+2)b}{1+b}\right)>0.$$ Therefore,  in view of Lemma 2.1, it is sufficient to prove that
 $$Re\left\{\frac{zr_{\eta,\gamma}''(z)}{r_{\eta,\gamma}'(z)}\right\}>-\frac{n+2}{2}$$ in $\mathbb{D},$ or equivalently \begin{equation} \label{eq:15.1}
 Re\left\{n+2+\frac{2zr_{\eta,\gamma}''(z)}{r_{\eta,\gamma}'(z)}\right\}>0
 \end{equation} in $\mathbb{D}.$
 From \eqref{eq:12}, we have $$r_{\eta,\gamma}'(z)=\frac{f'(z)}{1-e^{i(\theta-2\gamma)}z^n},$$  which gives $$\frac{zr_{\eta,\gamma}''(z)}{r_{\eta,\gamma}'(z)}=\frac{zf''(z)}{f'(z)}+\frac{ne^{i(\theta-2\gamma)}z^n}{1-e^{i(\theta-2\gamma)}z^n}$$
 or $$n+2+\frac{2zr_{\eta,\gamma}''(z)}{r_{\eta,\gamma}'(z)}=2\left(1+\frac{zf''(z)}{f'(z)}\right)+n\left(\frac{1+e^{i(\theta-2\gamma)}z^n}{1-e^{i(\theta-2\gamma)}z^n}\right).$$ Now, $Re\left(1+\frac{zf''(z)}{f'(z)}\right)>0$ in $\mathbb{D}$ because $f$ is convex in $\mathbb{D}$ (as $zf'$ is starlike in $\mathbb{D}$) and for $|z|<1,$ we have $Re\left(\frac{1+e^{i(\theta-2\gamma)}z^n}{1-e^{i(\theta-2\gamma)}z^n}\right)>0.$ Thus \eqref{eq:15.1} is true.\end{proof}
 \begin{rem}
 We remark that Theorem 1.9 can be obtained from Theorem 3.2 by setting $b=\alpha=\gamma=0$ and choosing $\eta$ in $[\pi/2, \pi).$  \end{rem}
\section{Acknowledgement} First author is thankful to UGC - CSIR, New Delhi for providing financial support in the form of Senior Research Fellowship vide Award Letter Number - 2061540842.


\begin{thebibliography}{00}


\bibitem[Beig(2017)]{10} Beig, S., Ravichandran, V.: Convexity in one direction of convolutions and linear combination of harmonic functions. arXiv:1703.03599 (2017). 
\bibitem[Clunie(1984)]{4} Clunie, J., Sheil-Small, T.: Harmonic univalent functions. Ann. Acad. Sci. Fenn. Ser. A I Math. 9, 3--25 (1984).
\bibitem[Dorff(2012)]{5} Dorff, M.: Convolutions of planar harmonic convex mappings. Complex Var. Elliptic Equ. 45.3, 263--271 (2001).
\bibitem[Dorff(2012)]{3} Dorff, M., Nowak, M., Woloszkiewicz, M.: Convolutions of harmonic convex mappings. Complex Var. Elliptic Equ. 57.5, 489--503 (2012).
\bibitem[Duren(2015)]{2} Duren, P.: Harmonic Mappings in the plane. Cambridge Univ. Press. (2004).
\bibitem[Kumar(2015)]{6} Kumar, R., Gupta, S., Singh, S., Dorff, M.: On harmonic convolutions involving a vertical strip mapping. Bull. Korean Math. Soc. 52.1, 105--123 (2015).
\bibitem[Kumar(2016)]{7} Kumar, R., Dorff, M., Gupta, S., Singh, S.: Convolution properties of some harmonic mappings in the right half-plane. Bull. Malays. Math. Sci. Soc. 39.1, 439--455 (2016).
\bibitem[Lewy(1936)]{1}  Lewy, H.: On the non vanishing of the jacobian in certain one to one mappings. Bull. Amer. Math. Soc. 42, 689--692 (1936).
\bibitem[Li(2013)]{35} Li, L., Ponnusamy, S.: Convolutions of slanted half-plane harmonic mappings. Analysis (Munich). 33.2, 159--176 (2013).
\bibitem[Nagpal(2015)]{13} Nagpal, S., Ravichandran, V.: Convolution properties of the harmonic Koebe function and its connection with 2-starlike mappings. Complex Var. Elliptic Equ. 60.2, 191--210 (2015).
\bibitem[Pommerenke(1963)]{9} Pommerenke, C.: On Starlike and close-to-convex functions. Proc. Lond. Math. Soc. 13.3, 290--304 (1963).
\bibitem[Liu(2017)]{12} Liu, Z., Jiang, Y., Sun, Y.: Convolutions of Harmonic Half-Plane Mappings with Harmonic Vertical Strip Mappings. Filomat. 31.7, 1843--1856 (2017).
\bibitem[Ruscheweyh(1973)]{22}  Ruscheweyh, S., Sheil-Small, T.: Hadamard products of Schlicht functions and the $P\acute{o}lya$-Schoenberg conjecture. Comment. Math. Helv. 48.1, 119--135 (1973).
\bibitem[Wang(2016)]{11} Wang, Z., Liu, Z., Li, Y.: On convolutions of harmonic univalent mappings convex in the direction of the real axis. J. Appl. Anal. Comput. 6.1, 145--155 (2016).
\end{thebibliography}
\end{document}